\documentclass{amsart}
\usepackage{amsmath,amssymb, amsthm}
\usepackage{enumerate}
\usepackage{hyperref}
\usepackage{dsfont}
\usepackage{mathtools}
\usepackage{float}

\newtheorem{theorem}{Theorem}
\newtheorem{lemma}[theorem]{Lemma}

\newtheorem{observation}[theorem]{Observation}
\newtheorem{fact}[theorem]{Fact}
\theoremstyle{definition}
\newtheorem{definition}[theorem]{Definition}
\newtheorem{remark}[theorem]{Remark}
\newtheorem{example}[theorem]{Example}
\newtheorem{claim}{Claim}

\newtheorem{theoremOne}{Theorem}

\DeclareMathOperator{\ran}{ran}
\DeclareMathOperator{\dm}{d}
\DeclareMathOperator{\cl}{cl}
\DeclareMathOperator{\acl}{acl}
\DeclareMathOperator{\SRM}{SRM}

\newcommand{\set}[1]{\{#1\}}
\newcommand{\setcol}[2]{\{#1 : #2\}}

\renewcommand{\phi}{\varphi}
\newcommand{\M}{\mathcal{M}}
\newcommand{\N}{\mathcal{N}}
\newcommand{\B}{\mathcal{B}}
\newcommand{\parity}{\equiv}
\newcommand{\mt}{\,\text{(mod 2)}}
\newcommand{\ILD}{{\text{I}\Lambda \text{D}}}

\renewcommand{\setminus}{\smallsetminus}

\title{Recursive spectra of flat strongly minimal theories}
\author{Uri Andrews, Omer Mermelstein}
\address{Department of Mathematics, University of Wisconsin--Madison, 480 Lincoln Dr., Madison, WI 35706, USA}
\email{andrews@math.wisc.edu, omer@math.wisc.edu}
\thanks{The first author was partially supported by NSF grant DMS-1600228.}

\keywords{Spectrum of recursive models, Spectrum of computable models, Flatness}
\subjclass[2020]{03C57, 03D45}

\begin{document}
\maketitle

\section{Introduction}

\begin{abstract}
We show that for a model complete strongly minimal theory whose pregeometry is flat, the recursive spectrum (SRM($T$)) is either of the form $[0,\alpha)$ for $\alpha\in \omega+2$ or $[0,n]\cup\set{\omega}$ for $n\in \omega$, or $\{\omega\}$, or contained in $\set{0,1,2}$.

Combined with previous results, this leaves precisely 4 sets for which it is not yet determined whether each is the spectrum of a model complete strongly minimal theory with a flat pregeometry.
\end{abstract}

Recursive model theory examines what information is required to recursively present models of a given theory. Of particular interest is the case where the theory itself poses no obstacles to computing its models. 
This is exemplified by the case where a theory $T$ has both recursive and non-recursive models. The spectrum problem for strongly minimal theories focuses on the clearest case of this phenomenon. From this point onward, we only consider complete theories. The Baldwin-Lachlan theorem \cite{BL} shows that the countable models of a strongly minimal non-$\aleph_0$-categorical theory $T$ are characterized by dimension and thus $T$ has a countable chain of countable models: $\M_0\preceq \M_1\preceq \cdots \preceq \M_\omega$. We let the spectrum of recursive models of $T$, or $\SRM(T)$, be the set of $k\in \omega+1$ so that $\M_k$ has a recursive presentation. The spectrum problem asks to characterize the sets $S\subseteq \omega+1$ which occur as $\SRM(T)$. The spectrum problem has been a focus in recursive model theory since Goncharov \cite{Go78} first showed the existence of a non-trivial spectrum. 

It has proven useful \cite{GHLLM,Lobachevski,AM} to examine the spectrum problem in light of the pregeometry of the theory. Zilber famously conjectured a trichotomy for the possible pregeometries of strongly minimal theories: That the pregeometry is either disintegrated, locally modular, or field-like. Hrushovski \cite{Udi} produced a counterexample, thus introducing a new type of pregeometry: The flat pregeometries. In particular, flat pregeometries are non-disintegrated and preclude the theory from interpreting a group, so flat pregeometries do not fit into Zilber's conjectured trichotomy. Hrushovski \cite{HrushovskiSecond} also presented a construction to fuse together strongly minimal theories which, when applied to theories with non-flat geometries, yields a new theory whose geometry does not satisfy the Zilber conjecture and is also not flat.

Partial results have been obtained for the spectrum problem in the case of each of the trichotomous pregeometries. For example, Andrews and Medvedev \cite{AM} show that the only spectra of disintegrated strongly minimal theories with finite signatures are $\emptyset,\{0\},\omega+1$. They also show that if $T$ is a locally modular strongly minimal theory with a finite signature which expands a group, then $\SRM(T)$ is among $\emptyset,\{0\},\omega+1$. They also note that field-like strongly minimal theories with finite signatures expanding a field have $\SRM$ of $\omega+1$.

So far, we have very little information about the spectra of flat strongly minimal theories, i.e., strongly minimal theories with flat pregeometries. Each of these other cases: Disintegrated theories, locally modular theories expanding a group, or field-like theories expanding a field admit some level of quantifier-elimination after naming parameters. This is not true for flat strongly minimal theories (e.g., \cite{TheoryArithmetic}). In this paper, we examine the spectrum problem for \emph{model complete} flat strongly minimal theories. This is a wide enough class to encompass all the existing constructions of spectra of a flat strongly minimal theory.

In particular, we show:

\begin{theorem}
	If $T$ is a model complete flat strongly minimal theory, then $\SRM(T)$ is of one of the following forms:
	
	\begin{itemize}
		\item $[0,\alpha)$ for $\alpha\in \omega+2$, $[0,n]\cup \set{\omega}$ for $n\in \omega$, $\{\omega\}$, 
		\item $\{1\}$, $\{2\}$, $\{0,2\}$, $\set{1,2}$, 
	\end{itemize}
	
	Further, the sets in the first line are in fact spectra of
	model complete flat strongly minimal theories by \cite{A0n} and \cite{AndrewsMermelstein}. We do not know whether any of the 4 sets (not schema) in the second line are spectra of model complete flat strongly minimal theories.
\end{theorem}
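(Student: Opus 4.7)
The plan is to reduce the theorem to a downward-closure statement and then prove that statement by effectively constructing smaller models out of given ones.

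\textbf{Reduction to downward closure.} Every spectrum allowed by the first bullet is either a downward-closed subset of $\omega+1$, of the shape $[0,n]\cup\{\omega\}$, or equals $\{\omega\}$, while the four uncertain sets are exactly the non-downward-closed subsets of $\{0,1,2\}$. Combined with the cited realizability results, the theorem therefore reduces to two closure statements: (i) if $n \in \SRM(T)$ for some $n \geq 3$, then $[0,n] \subseteq \SRM(T)$; and (ii) if $\omega \in \SRM(T)$ and $k \in \SRM(T) \cap \omega$, then $[0,k] \subseteq \SRM(T)$.

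\textbf{Effective construction of the smaller model.} The central lemma I would aim for is that from a recursive presentation of $\M_n$ with $n \geq 3$ one can compute a recursive presentation of $\M_{n-1}$; iterating gives (i), and a mild variant handles (ii). In a recursive copy of a model complete theory the elementary diagram is decidable: model completeness makes every formula equivalent modulo $T$ to an existential one, the atomic diagram is decidable by hypothesis, and $T$ is c.e.\ by enumerating existential equivalents, hence decidable by completeness. In particular, $\acl(A)$ is uniformly c.e.\ in a finite tuple $A$. So it suffices to effectively locate an independent $(n-1)$-tuple $\bar b$ inside a given recursive $\M \cong \M_n$; the substructure on $\acl(\bar b)$ computed within $\M$ will then be a recursive presentation of $\M_{n-1}$.

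\textbf{The hard part.} Effectively certifying independence is the crux. Non-algebraicity over a finite set is a priori co-c.e., so without more structure a candidate independent tuple cannot be validated in finite time. Flatness combined with $n \geq 3$ should supply the needed leverage: flatness imposes strong inclusion-exclusion constraints on the dimensions of intersections of closed sets in the pregeometry, which should force any algebraic dependence inside a configuration of dimension at least three to be witnessed in a bounded combinatorial way. My plan is to convert this rigidity into a uniform procedure that, after inspecting a controlled finite portion of the atomic diagram around a candidate $\bar b$, definitively certifies $\bar b$ as independent. Extracting the correct effective bound on the depth of such a certificate, and proving the bound is c.e.-computable from $\bar b$ and the diagram, is the technical heart of the argument. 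The fact that the flatness constraints become substantively vacuous in dimension $\leq 2$ is precisely why the method stops at $n=3$, which in turn is why the four non-downward-closed subsets of $\{0,1,2\}$ must remain open at the level of this approach.
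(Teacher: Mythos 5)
Your key step --- that in a recursive model of a model complete theory the elementary diagram is decidable and hence $\acl(A)$ is uniformly c.e.\ in $A$ --- is where the argument breaks. Model completeness gives, for each \emph{fixed} formula, an existential equivalent and an existential equivalent of its negation, so each fixed definable set is recursive in a recursive copy; but this is inherently non-uniform: to pass from an arbitrary formula to its existential equivalent you must already be able to verify equivalences modulo $T$, and your proposed enumeration of $T$ ``by enumerating existential equivalents'' is circular. Moreover, if the claim were true it would prove far too much: a decidable elementary diagram makes $T$ decidable, and by the Harrington--Khisamiev theorem every countable model of a decidable $\aleph_1$-categorical theory has a decidable (in particular recursive) copy, so every model complete strongly minimal theory with one recursive model would have $\SRM(T)=[0,\omega]$. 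This contradicts the realizability results the theorem itself cites: there are model complete flat strongly minimal theories with spectra $[0,n]$, $[0,n]\cup\set{\omega}$, and $\set{\omega}$. In the paper, membership $d\in\acl(\bar a)$ is only $\Sigma^0_2$ (one must witness an existential formula together with a bound on its number of solutions, and the bound is $\Pi^0_1$-type information), and Lemma \ref{FDClosedSetsAreDelta2} upgrades this only to $\Delta^0_2$ inside a finite-dimensional model by the exchange trick.

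Consequently the genuine difficulty is not, as you suggest, effectively certifying independence of a tuple --- a generic tuple can simply be fixed non-uniformly --- but building a recursive copy of a submodel that is only known to be $\Delta^0_2$. The paper handles this with Lemma \ref{GoingDownTrick}: a $\Delta^0_2$ elementary substructure containing an \emph{infinite $\Sigma^0_1$ subset} has a recursive copy, via a movable-markers construction that retargets elements whose approximations leave $\M$ into the c.e.\ set. Flatness enters not as a bounded combinatorial certificate of independence but through the ping-pong sequence argument (Lemma \ref{lemma: PPS is injective}), which shows that the formula-closure $\Lambda_\phi$ of an independent $(n+1)$-tuple is infinite (Lemma \ref{lemma: infinite PPS over indepednent n+1-tuple}); that is what supplies the needed infinite $\Sigma^0_1$ set. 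Finally, the exceptional subsets of $\set{0,1,2}$ are isolated by the purely geometric comparison of the minimal circuit dimension $n$ with the prime model dimension $p$ (namely $p\le n+1$, and $p\ge n$ whenever $n\ge 3$), together with the ILD and carousel arguments when $n=2$; they arise from the case of a minimal circuit of size $3$, not from flatness ``becoming vacuous in dimension $\le 2$.'' Your reduction to the two downward-closure statements is consistent with what the paper proves, but the proposed proof of those statements does not go through without this machinery.
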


We note that the proof of this result requires both some recursion theoretic arguments (as in section \ref{sec:Strategy}) and some purely geometric arguments (as in sections \ref{sec:PPS} and \ref{sec:relatingcircuitsizetomodeldimensions}).

\section{Background}
A \emph{combinatorial pregeometry} $G$ is a set $X$ equipped with an operator $\cl:\mathcal{P}(X)\to \mathcal{P}(X)$ satisfying, for every $A\subseteq B\subseteq X$, $a,b\in X$:
\begin{itemize}
\item
$A\subseteq \cl(A) \subseteq \cl(B) = \cl(\cl(B))$ \hfill (Closure operator)
\item
if $a\in \cl(A\cup\set{b})\setminus \cl(A)$, then $b\in \cl(A\cup\set{a})$ \hfill(Exchange Principle)
\item
$\cl(A) = \bigcup\setcol{\cl(A_0)}{A_0\subseteq A, A_0 \text{ finite}}$ \hfill (Finite character)
\end{itemize}
We say that $A$ is closed if $A=\cl(A)$.
We say that ``$a$ depends on $A$'' when $a\in \cl(A)$. We say that a set of points $A$ is \emph{independent over} $B$ if $a\notin \cl(B\cup (A\setminus\set{a}))$ for every $a\in A$. We say $A$ is \emph{independent} if it is independent over the empty set. A \emph{basis} for a set $A$ is a minimal $Y\subseteq A$ such that $A\subseteq \cl(Y)$. The \emph{dimension} of a set $A$, denoted $\dm(A)$, is the cardinality of any basis of $A$ --- it follows from the exchange principle that all bases have the same cardinality.

A pregeometry is \emph{disintegrated} if whenever $b$ depends on $A$, then $b$ depends on some single element $a_0\in A$, i.e., $\cl(A) = \bigcup_{a_0\in A} \cl(a_0)$.

For $\N$ a strongly minimal structure, we define $G_\N$ to be the pregeometry given by the algebraic closure operator $\acl_{\N}$. When the structure $\N$ is clear from context, we drop the subscript. The pregeometry of a strongly minimal theory $T$ is $G_\M$, where $\M$ is the countable saturated model of $T$. In this paper, we will be interested in theories whose pregeometry is \emph{flat}:

\begin{definition}
In the context of an ambient pregeometry $G$, let $\Sigma = \set{E_1,\dots,E_k}$ be a collection of finite dimensional closed subsets of $G$.
We denote
\[
\Delta(\Sigma) = \sum_{\emptyset\neq s\subseteq \set{1,\dots, k}} (-1)^{|s|+1}\dm\left(\bigcap_{i\in s} E_i\right).
\]
This is the result of applying the inclusion-exclusion principle to the elements of $\Sigma$, where cardinality is replaced with dimension.

A pregeometry $G$ is \emph{flat} if it is non-disintegrated and whenever $\Sigma$ is a finite collection of finite dimensional closed sets in $G$, then $\Delta(\Sigma) \geq \dm(\bigcup \Sigma)$.
\end{definition}

\begin{example}
	Any strongly minimal theory which expands a group, such as $\text{Th}(\mathbb{Q},+)$ or $\text{ACF}_p$, cannot be flat. To see this, fix a generic triple $a,b,c$ in the group. Let $S_1 = \cl(a,b)$, $S_2=\cl(b,c)$, and $S_3=\cl(a,b\cdot c)$, and $S_4=\cl(a\cdot b,c)$. Then any two $S_i$'s intersect in a set of dimension 1, and any three have empty intersection, so $\sum_{\emptyset\neq s\subseteq \set{1,\dots, k}} (-1)^{|s|+1}\dm\left(\bigcap_{i\in s} E_i\right)= -6 +4\cdot 2 = 2$ and yet the dimension of $\bigcup_i S_i$ is 3.
\end{example}

Note that we include in the definition of flatness that the pregeometry is non-disintegrated. This choice is not universally made in the literature, but it allows us to concisely focus on the collection of pregeometries we wish to consider.

\begin{definition}
	A pregeometry $G$ is called \emph{homogeneous} if for any set $A$, for any elements $a,b\in G\setminus\cl(A)$, there is an automorphism $\sigma$ of $G$ over $A$ with $\sigma(a) = b$. In particular, for any two independent tuples $\bar{a}$, $\bar{b}$ of the same length, there is an automorphism $\sigma$ of $G$ with $\sigma(\bar{a}) = \bar{b}$.
\end{definition}

The pregeometry of a model of a strongly minimal theory $T$ is homogeneous, because every model of $T$ is homogeneous and all independent tuples of length $k$ have the same type --- the unique generic $k$-type.

The following useful fact follows directly from the Tarski-Vaught test for being an elementary substructure.
\begin{fact}\label{TVFact}
	If $\N$ is strongly minimal and $X\subseteq N$ is algebraically closed and infinite, then $X\preceq \N$. It follows that for cardinals $\alpha<\beta$, if a strongly minimal theory $T$ has a model of dimension $\alpha $, then $T$ has a model of dimension $\beta$.
\end{fact}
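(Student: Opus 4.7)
The plan is to prove the two assertions in sequence, using the first to obtain the second. For the first, I would apply the Tarski-Vaught test. Given a formula $\varphi(x,\bar y)$ and parameters $\bar b\in X$ with $\N\models\exists x\,\varphi(x,\bar b)$, consider the definable set $D=\varphi(\N,\bar b)$. By strong minimality of $\N$, $D$ is either finite or cofinite. In the finite case, every element of $D$ is algebraic over $\bar b$, so $D\subseteq \acl(\bar b)\subseteq X$, and any element of $D$ witnesses the existential inside $X$. In the cofinite case, $\N\setminus D$ is finite and also contained in $\acl(\bar b)\subseteq X$; since $X$ is infinite, some element of $X$ must lie in $D$. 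Either way the Tarski-Vaught criterion is satisfied, so $X\preceq\N$.

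For the second assertion, I would start from a model $\M\models T$ of dimension $\alpha$ together with a basis $B_\alpha$ of $\M$. By upward L\"owenheim-Skolem, embed $\M$ elementarily into a larger $\N\succeq\M$ of cardinality at least $\beta+\aleph_0$; since $|N|=\aleph_0+\dm(\N)$ in a strongly minimal structure with countable signature, this forces $\dm(\N)\geq\beta$. Extend $B_\alpha$ to an independent subset $B'\subseteq N$ of cardinality exactly $\beta$, and set $X=\acl(B')$. Then $X$ is algebraically closed, contains the infinite set $\M$, and has dimension $\beta$. Applying the first assertion gives $X\preceq\N$, so $X\models T$ with $\dm(X)=\beta$.

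The hard part, inasmuch as there is one, is recognizing that the infinitude hypothesis on $X$ in the first assertion is used exactly once, namely to extract a witness in the cofinite branch of the strong-minimality dichotomy. In the second assertion this infinitude is guaranteed automatically by choosing $B'$ to extend the basis of the already infinite model $\M$, which also handles cleanly the edge case in which $\beta$ is finite.
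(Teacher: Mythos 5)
Your first assertion is proved exactly as the paper intends (the paper gives no proof beyond remarking that it ``follows directly from the Tarski-Vaught test''), and that half of your argument is correct: in a strongly minimal $\N$, a definable set $\varphi(\N,\bar b)$ with $\bar b\in X$ is finite (hence inside $\acl(\bar b)\subseteq X$) or cofinite (hence meets the infinite set $X$), so the Tarski-Vaught criterion holds and $X\preceq\N$.

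The second half has a genuine misstep in the cardinality bookkeeping. You take $\N\succeq\M$ with $|N|\geq\beta+\aleph_0$ and claim that $|N|=\aleph_0+\dm(\N)$ forces $\dm(\N)\geq\beta$. This is only valid when $\beta$ is uncountable: if $\beta\leq\aleph_0$ then $\beta+\aleph_0=\aleph_0$, so the requirement is just that $\N$ be countable, and $\N=\M$ itself (of dimension $\alpha<\beta$) satisfies it; the identity $|N|=\aleph_0+\dm(\N)$ says nothing about the dimension of a countable model. Unfortunately the finite and countable values of $\beta$ are precisely the ones the paper needs this fact for (the chain $\M_0\preceq\M_1\preceq\cdots\preceq\M_\omega$ and Fact 2's later uses, e.g.\ producing an $(n+1)$-dimensional model). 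The repair is routine: either take $\N$ of cardinality strictly greater than $\aleph_0+\beta$ (say $\aleph_1$ when $\beta\leq\aleph_0$), so that $|N|=\aleph_0+\dm(\N)>\aleph_0$ forces $\dm(\N)=|N|\geq\beta$, or more directly realize the non-algebraic generic type over $\M$ repeatedly (by compactness or in a suitably saturated extension) to extend $B_\alpha$ to an independent set of size $\beta$. Once $\dm(\N)\geq\beta$ is secured, the remainder of your argument --- setting $X=\acl(B')$ for an independent $B'\supseteq B_\alpha$ of size $\beta$, noting $\M\subseteq X$ so $X$ is infinite and algebraically closed, and invoking the first assertion to get $X\preceq\N$ of dimension $\beta$ --- is correct.
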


In examining the spectrum problem, there is a slight disconnect between the indices used in the definition of the spectrum of $T$ and the dimensions of the models of $T$. It follows from Fact \ref{TVFact} that if $p$ is the dimension of the prime model of $T$, then $\M_k$ (the $k$th model in the countable chain of models of $T$) has dimension $p+k$. Thus, if we were to define spectra in terms of dimensions, that notion would capture essentially the same information, but it would be far easier to witness sets such as $[10,\omega]$ as a spectrum, since we could just make the prime model have dimension 10.

\begin{fact}\label{fact:Carousel}
	Let $\bar{a}$ be any tuple in a strongly minimal structure, and let $b_1\ldots, b_n$ be a generic tuple over $\bar{a}$. For each $1\leq i\leq n$, let $\bar{c}_i=\bar{a}, b_1,\ldots b_{i-1},b_{i+1},\ldots b_n$. Then $\bigcap_{i=1}^n \acl(\bar{c}_i)=\acl(\bar{a})$.
\end{fact}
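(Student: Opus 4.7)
The containment $\acl(\bar{a}) \subseteq \bigcap_i \acl(\bar{c}_i)$ is immediate from $\bar{a} \subseteq \bar{c}_i$, so my plan is to prove the reverse inclusion by contradiction, using only the pregeometry axioms (closure operator properties and exchange).

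Suppose for contradiction that some $d$ lies in $\bigcap_i \acl(\bar{c}_i) \setminus \acl(\bar{a})$. Starting from the witness $d \in \acl(\bar{c}_1) = \acl(\bar{a}, b_2, \ldots, b_n)$, I would extract a \emph{minimal} $I \subseteq \set{2, \ldots, n}$ with $d \in \acl(\bar{a} \cup \setcol{b_i}{i \in I})$. The assumption $d \notin \acl(\bar{a})$ forces $I$ to be nonempty; fix any $j \in I$. The key step is a single application of exchange: minimality of $I$ gives
\[
d \in \acl(\bar{a} \cup \setcol{b_i}{i \in I}) \setminus \acl(\bar{a} \cup \setcol{b_i}{i \in I \setminus \set{j}}),
\]
so by the exchange principle $b_j \in \acl(\bar{a} \cup \set{d} \cup \setcol{b_i}{i \in I \setminus \set{j}})$. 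Since $j \in I \subseteq \set{2,\ldots,n}$, we have $\bar{a} \cup \setcol{b_i}{i \in I \setminus \set{j}} \subseteq \bar{c}_j$, and $d \in \acl(\bar{c}_j)$ by hypothesis, so $b_j \in \acl(\bar{c}_j)$. This contradicts the independence of $b_1, \ldots, b_n$ over $\bar{a}$.

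I do not anticipate any serious obstacle: the entire argument is essentially one invocation of exchange on a minimally chosen dependence set. The only bookkeeping worth flagging is that restricting to $I \subseteq \set{2,\ldots,n}$ (so that $j \neq 1$) is what guarantees $\bar{a} \cup \setcol{b_i}{i \in I \setminus \set{j}} \subseteq \bar{c}_j$, placing the contradictory dependence inside the correct $\bar{c}_j$. Nothing beyond the pregeometry axioms is used --- strong minimality, homogeneity, and flatness all play no role, so this is really a fact about pregeometries and independent tuples.
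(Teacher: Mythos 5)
Your argument is correct: the minimal set $I$ exists because $\bar{c}_1$ is finite, minimality gives exactly the hypothesis $d \in \acl(\bar{a} \cup \setcol{b_i}{i \in I}) \setminus \acl(\bar{a} \cup \setcol{b_i}{i \in I\setminus\set{j}})$ needed for exchange, and the resulting $b_j \in \acl(\bar{a} \cup \set{d} \cup \setcol{b_i}{i \in I\setminus\set{j}}) \subseteq \acl(\acl(\bar{c}_j)) = \acl(\bar{c}_j)$ contradicts genericity of $b_1,\ldots,b_n$ over $\bar{a}$. The route differs from the paper's in organization though not in its engine: the paper argues by induction on $n$, first identifying $\bigcap_{i=1}^{n} \acl(\bar{c}_i)$ (omitting the last index) with $\acl(\bar{a}, b_{n+1})$ via the inductive hypothesis, and then applying exchange once to show no element of $\acl(\bar{a}, b_{n+1}) \setminus \acl(\bar{a})$ can lie in $\acl(\bar{c}_{n+1})$; your proof replaces the induction by a single direct application of exchange to a minimal dependence set. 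Your version is slightly more self-contained and makes explicit that only the pregeometry axioms are used (which is also true of the paper's proof, since it only manipulates $\acl$ as a pregeometry), while the paper's induction has the mild side benefit of exhibiting the intermediate identity for partial intersections. One small remark on your flagged bookkeeping: the restriction $I \subseteq \set{2,\ldots,n}$, hence $j \neq 1$, is not actually what the containment needs --- $\bar{c}_j$ contains $\bar{a}$ and every $b_i$ with $i \neq j$, so $\bar{a} \cup \setcol{b_i}{i \in I\setminus\set{j}} \subseteq \bar{c}_j$ holds for any $j \in I$ no matter which $\bar{c}_i$ you start from; starting from $\bar{c}_1$ merely supplies some finite dependence of $d$ on $\bar{a}$ and the $b_i$'s. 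This is harmless over-caution, not a gap.
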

\begin{proof}
	We prove this by induction on $n$. For $n=1$, this is evident. For $n+1$, the inductive hypothesis shows $\bigcap_{i=1}^n \acl(\bar{c}_i)=\acl(\bar{a},b_n)$. By exchange, any element $x\in \acl(\bar{a},b_n)\smallsetminus \acl(\bar{a})$
	 has the property that $b_n\in \acl(\bar{a},x)$. Therefore, such an $x$ cannot be in $\acl(\bar{c}_n)$. So, $\bigcap_{i=1}^{n+1} \acl(\bar{c}_i)=\acl(\bar{a})$.
\end{proof}

We use the following notation throughout: If $\N$ is a structure, $\bar{a}\in \N$, and $\phi(\bar{x},\bar{y})$ is a formula, then $\phi(\N,\bar{a})=\setcol{\bar{b}}{\N\models \phi(\bar{b},\bar{a})}$.

All signatures are assumed to be recursive. Furthermore, since it makes no difference for recursive presentations, we assume that all signatures are relational.
\section{Our main strategy}\label{sec:Strategy}

The following Lemma is our main tool in showing that if the $k+1$-dimensional model is recursive, then the $k$-dimensional model is recursive.

\begin{lemma}\label{GoingDownTrick}
	Suppose that $\N$ is a recursive strongly minimal structure and ${\M\preceq \N}$ is a $\Delta^0_2$ subset of $\N$. Further suppose that $A\subseteq \M$ is any infinite $\Sigma^0_1$ set. Then $\M$ has a recursive copy.
\end{lemma}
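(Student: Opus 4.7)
The plan is to build a recursive copy $\mathcal{K}$ of $\M$ on universe $\omega$ via a staged construction that uses three pieces of effective data: the recursive atomic diagram of $\N$; a recursive enumeration $a_0,a_1,\dots$ of the $\Sigma^0_1$ set $A$ (without repetition); and a recursive modulus $f\colon \omega \times \omega \to \{0,1\}$ with $\lim_s f(x,s)=\chi_\M(x)$, guaranteed by $\M$ being $\Delta^0_2$. Alongside $\mathcal{K}$ I will maintain a sequence of finite partial injections $\pi_s\colon\mathcal{K}_s\hookrightarrow\N$, each a partial isomorphism, intended to converge pointwise to an isomorphism $\pi\colon\mathcal{K}\to\M$. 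Atomic facts committed to in $\mathcal{K}$ are irrevocable, but the values $\pi_s(k)$ may be revised between stages as long as any revision preserves the atomic type of $k$ over the still-committed parameters.

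At stage $s$ a new element $s$ is added to $\mathcal{K}$ and its atomic type over $\{0,\dots,s-1\}$ is committed, driven by a target $\pi_{s+1}(s)\in\N$ chosen under a two-tier priority scheme. The high-priority \emph{$A$-rule}: if any $a_i$ enumerated so far is not yet in $\ran(\pi_s)$, set $\pi_{s+1}(s)=a_i$ and commit the atomic facts about $s$ to mirror those of $a_i$ over $\pi_s(\mathcal{K}_s)$ in $\N$; since $a_i\in A\subseteq \M$, this commitment is permanent. The low-priority \emph{$M$-rule}: otherwise, use $f(\cdot,s)$ to locate the least $x\in \N$ currently believed to be in $\M$ and $\N$-generic over $\pi_s(\mathcal{K}_s)$, tentatively set $\pi_{s+1}(s)=x$, and commit the corresponding generic atomic type. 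When $f$ later reveals $x\notin\M$, a revision action replaces $\pi(s)$ by some other $x'\in\M$ realizing the same atomic type over the parameters; strong minimality together with $\M\preceq\N$ ensures such $x'$ exists in $\M$ provided the dimension of $\M$ over the committed parameters is positive.

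The central obstacle is the case $\dm(\M)<\omega$: after $\dm(\M)$ uses of the $M$-rule, no further generic element of $\M$ remains to revise to, which would kill the construction. The plan is to keep the $M$-rule cautious, demanding $\Delta^0_2$-evidence that $\M$ truly contains a new element $\N$-generic over the current parameters before committing to a generic atomic type. Once the $M$-rule ceases to act, the $A$-rule alone continues to populate $\mathcal{K}$; since $A\subseteq \M$ is infinite and $\acl^{\N}(A)\subseteq \M$ by $\M\preceq\N$, the remaining elements needed to build $\mathcal{K}\cong\M$ show up through repeated applications of the $A$-rule (which produces the algebraic dependencies of $\acl^\N(A)$ automatically via the atomic facts of $\N$). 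In the favorable subcase $\dm(A)=\omega$, one has $\acl^\N(A)=\M$ and the $A$-rule alone suffices.

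In the limit, each $\pi_s(k)$ either never changes (if $k$ was committed by the $A$-rule) or changes only finitely often (by convergence of $f$ and the cautious revision strategy), so $\pi=\lim_s\pi_s$ is well-defined on $\omega$, bijective onto $\M$, and atomic-preserving; therefore $\mathcal{K}\cong \M$ recursively. I expect the hard part to be the convergence analysis of the revision mechanism together with the coordination of the $A$- and $M$-rules so that $\dm(\mathcal{K})=\dm(\M)$, particularly when $\dm(\M)$ is finite and the cautious $M$-rule must decide, on the basis of $\Delta^0_2$-evidence only, exactly when to stop emitting new generic commitments.
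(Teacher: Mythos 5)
There is a genuine gap, and it sits exactly where your construction leans on ``replace $\pi(s)$ by some other $x'\in\M$ realizing the same atomic type.'' Membership in $\M$ is only $\Delta^0_2$, so no candidate $x'$ can ever be \emph{certified} to lie in $\M$: a replacement merely believed to be in $\M$ can later appear to leave, forcing another revision, and nothing in your scheme bounds these revisions or guarantees that the limiting value lands in $\M$. The only elements whose membership in $\M$ is permanently verifiable are those of the c.e.\ set $A$, and the missing idea is to use $A$ as the \emph{target of revisions}, not merely to seed fresh elements. Concretely: when the current image $z$ of some element appears to leave $\M$, let $\bar c$ be the still-believed parameters and consider the existential projection $\exists\bar y\,\phi(\bar c,z,\bar y)$ of the atomic commitments (where $\bar y$ lists the other currently suspect images). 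By strong minimality this set is finite or cofinite in $\N$. If it is finite, then $z\in\acl(\bar c)$; since $\bar c\subseteq\M$ and $\M\preceq\N$ this forces $z\in\M$, so the $\Delta^0_2$ approximation must flip back and no revision is needed. If it is cofinite, it meets the infinite set $A$, so $z$ can be redirected to some $a\in A$ (permanently, since $A\subseteq\M$), moving the other suspect images to witnesses $\bar y'$ to be handled later. This dichotomy is what makes the revision mechanism effective and convergent, and your proposal has no substitute for it.

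The same dichotomy also dissolves your ``central obstacle,'' which is an artifact of the wrong choice of what to commit: deliberately committing \emph{generic} atomic types risks irrevocably building more dimension than $\M$ has, and your ``cautious $M$-rule'' cannot repair this, because $\Delta^0_2$ evidence available at a finite stage can simply be wrong while your commitments are irrevocable. The correct move is never to aim at generic elements at all: copy, in order, the least element currently believed to be in $\M$ and not yet in the range, and rely on the redirection-to-$A$ argument when beliefs fail; then $\dim(\M)$ never enters. As written, your rules also fail surjectivity: the $M$-rule targets only elements believed generic over the current parameters and the $A$-rule targets only $A$, so non-generic elements of $\M\setminus A$ (e.g.\ elements of $\acl(A)\setminus A$) are never enumerated into the range of $\pi$ --- committing atomic facts of $\N$ does not add elements to the domain of $\mathcal K$ --- and the claim that $\dim(A)=\omega$ gives $\acl^{\N}(A)=\M$ is false (take $A$ to be every other element of a basis). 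A further, fixable, point: with a possibly infinite relational signature you cannot commit the full atomic type of a new element at a single stage; one must exhaust the signature by finite sub-signatures as the construction proceeds.
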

\begin{proof}
	Let $M_s$ be the elements of $\N$ which are in $\M$ according to stage $s$ of the $\Delta^0_2$-approximation to $\M$. We can alter the approximation to ensure that if $x\in A_s$, the elements enumerated into $A$ by stage $s$, then $x\in M_s$.
	We build $\B$ in stages, so that at stage $s$ we build $\B_s$ a finite structure in a finite sub-signature $\mathcal{L}_s$ of the signature $\mathcal L$ of $\N$. We copy  elements from $\N$, attempting to only copy the elements in $\M$. At each stage $s$, we have an embedding $f_s$ from our $\B_s$ into $\N$.
	 We start with $\B_0=\emptyset$, $\mathcal{L}_s=\emptyset$, and $f_0=\emptyset$.
	
	We now describe the stage $s+1$-construction:
	If the range of $f_s$ is contained in $M_{s+1}$, then we take $y$ to be the least member of $M_{s+1}$ which is not in the range of $f_s$, and we append a new element $x$ to $\B_{s+1}$ to copy this element. We extend $f_s$ to set  $f_{s+1}(x)=y$. If $\mathcal{L}_s\neq \mathcal L$, we add another symbol from $\mathcal L$ to $\mathcal{L}_s$ to form  $\mathcal{L}_{s+1}$. We then determine atomic facts in $\B_{s+1}$ to maintain that $f_{s+1}$ is an embedding of $\B_{s+1}$ into $\N$. 
	
	If the range of $f_s$ is not contained in $M_{s+1}$, then we let $z$ be the least element of the range of $f_s$ which is not in $M_{s+1}$. Let $\bar{c}$ be the elements of $M_{s+1}\cap \ran(f_s)$ and $z\bar{y}$ be the elements of $\ran(f_s)\smallsetminus M_{s+1}$. 	
	 We wait for a stage $t>s$ such that one of two things happens: Either (Outcome 1) $M_t\cap \ran(f_s)\neq M_{s+1}\cap \ran(f_s)$ or (Outcome 2) there is some element $a\in A_t$ and tuple $\bar{y}'$ so that altering $f_s$ to send $f_s^{-1}(z)$ to $a$ instead of $z$ and $f_s^{-1}(\bar{y})$ to $\bar{y}'$ will still give an embedding of $\B_s$ into $\N$. We speed-up the construction by doing nothing at each stage between $s$ and $t$ (so $\B_{t-1}=\B_s$, $f_{t-1}=f_s$, and $\mathcal{L}_{t-1}=\mathcal{L}_s$). In the first case, we simply proceed to the next stage with $f_{t} = f_{s}$. Note that the first outcome can happen only finitely many consecutive times, since the $\Delta^0_2$-approximation must settle down on $\ran(f_s)$. In the second case, we let $f_{t}$ be as described and go to the next stage. In both cases, we keep $\B_t=\B_s$ and $\mathcal{L}_t=\mathcal{L}_s$.
	 
	 We first argue that one of these two outcomes must happen. Suppose otherwise. In particular, $M_{s+1}\cap \ran(f_s)=M_t\cap \ran(f_s)$ for all $t>s$, so $\bar{c}\subseteq \M$. Then consider the formula $\exists \bar{y}\phi(\bar{c},z,\bar{y})$ where $\phi$ is the formula describing all of our atomic and negation of atomic commitments regarding the finite structure $\B_s$. By strong minimality, this describes either a finite or co-finite subset of $\N$. First suppose that it is finite, so $z\in \acl(\bar{c})$. 
	 But since $\bar{c}\subseteq \M$, this shows that $z\in \M$. But $z\notin M_{s+1}$. Thus the approximation to $\M$ on $\ran(f_s)$ must change at some point yielding Outcome 1 and a contradiction. Next suppose that it is co-finite. Then there is some $a\in A$ satisfying this formula. This exactly gives us the $a\bar{y}'$ needed to attain Outcome 2, again yielding a contradiction.
	 	
	We now show that for every $x\in \B$, $\lim_s f_s(x)$ converges to a member of $\M$. This is shown by induction. Suppose that all previous elements of $\B$ have already had $f_s$ converge to elements of $\M$. Then $f_t(x)$ can only change after stage $s$ by moving to become an element of $A_t$. This assignment is then permanent since $x\in M_r$ for all later $r$, due to $x$ being in $A_r$. Further, if $f_s(x)$ is not a member of $\M$, then at some stage $t$ where the approximation to $\M$ has settled down on $x$, perhaps after finitely many Outcome 1s, we will reassign $f$ on $x$ to be a member of $A$. 
	Thus $\lim_s f_s(x)$ exists and is a member of $\M$.

	Finally, we argue that every member $x\in \M$ is in the range of $\lim_s f_s$. This is also proved by induction. Let $s$ be a stage so that all previous elements of $M$ are in the range of $f_s$ and that $\lim f_s$ has already converged on these elements. Further, let $s$ be late enough that the approximation to $\M$ has settled on $x$. Since we only change $f_t$ on elements whose images do not appear to be in $\M$ at stage $t$, if $x\in \ran(f_t)$, for any $t>s$, then $x\in \ran(\lim_s f_s)$. Since $\lim_s f_s$ is well defined on $\B_s$ and $\ran(\lim_s f_s)\subseteq \M$, we can take some stage $t\geq s$ where $\ran(f_t)\subseteq M_{t+1}$. Then $x\in \ran(f_{t+1})$, so $x\in \ran(\lim_s f_s)$.
	Thus $\lim_s f_s$ gives an isomorphism from $\B$ to $\M$ showing that $\M$ has a recursive copy.
\end{proof}

Next we see that in any model complete strongly minimal theory, the $k$-dimensional model is $\Delta^0_2$ in the $k+1$-dimensional model. With the previous lemma, this focuses the problem on finding $\Sigma^0_1$ subsets of models.

\begin{lemma}\label{FDClosedSetsAreDelta2}
	Let $T$ be a strongly minimal model complete theory. Let $\N$ be a finite-dimensional model and $\bar{b}$ a generic tuple (possibly empty) in $\N$. Then $\acl(\bar{b})$ is a $\Delta^0_2$ subset of $\N$.
\end{lemma}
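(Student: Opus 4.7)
The plan is to establish the $\Delta^0_2$-ness of $\acl(\bar{b})$ by showing that both $\acl(\bar{b})$ and its complement $\N\setminus\acl(\bar{b})$ are $\Sigma^0_2$ in (the atomic diagram of) $\N$. The first half is standard: by strong minimality, $a\in \acl(\bar{b})$ iff there exist a formula $\phi(x,\bar{y})$ and an integer $N$ with $\N\models\phi(a,\bar{b})$ and $|\phi(\N,\bar{b})|\leq N$. Taking $\phi$ to be existential (allowed by model completeness), $\N\models\phi(a,\bar{b})$ is $\Sigma^0_1$ and $|\phi(\N,\bar{b})|\leq N$ is $\Pi^0_1$ (check that no $N+1$-tuple of distinct elements jointly satisfies $\phi$); the conjunction rewrites as a $\Sigma^0_2$ formula, and existentially quantifying over $(\phi,N)$ preserves this class.

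The main obstacle is showing the complement is $\Sigma^0_2$. The naive reformulation ``$a\notin\acl(\bar{b})$ iff every algebraic formula over $\bar{b}$ fails at $a$'' is naturally $\Pi^0_3$, so some extra idea is required. I will exploit finite-dimensionality. Let $n=\dm(\N)$ and $k=|\bar{b}|$; if $k=n$ then $\acl(\bar{b})=\N$ and the statement is trivial, so assume $k<n$. Since $\bar{b}$ is independent, fix once and for all an extension $b_1,\ldots,b_k,c_1,\ldots,c_{n-k}$ of $\bar{b}$ to a basis of $\N$, and treat $\bar{c}=c_1,\ldots,c_{n-k}$ as fixed parameters.

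The key claim is that $a\notin \acl(\bar{b})$ iff there exists $\bar{d}\in\N^{n-k-1}$ such that $c_i\in \acl(\bar{b},a,\bar{d})$ for every $i\in\set{1,\ldots,n-k}$. For the forward direction, genericity of $a$ over $\bar{b}$ lets one extend $\bar{b},a$ to a basis $\bar{b},a,\bar{d}$ of $\N$, whence $\acl(\bar{b},a,\bar{d})=\N\supseteq\bar{c}$. For the reverse, if $\bar{c}\subseteq \acl(\bar{b},a,\bar{d})$, then $\acl(\bar{b},a,\bar{d})\supseteq\acl(\bar{b}\bar{c})=\N$, so $\bar{b},a,\bar{d}$ is a spanning tuple of size $n$ and therefore a basis (hence independent), which gives $a\notin\acl(\bar{b})$.

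Applying the first step to the parameter tuple $\bar{b},a,\bar{d}$, each conjunct ``$c_i\in \acl(\bar{b},a,\bar{d})$'' is $\Sigma^0_2$; the finite conjunction over $i$ remains $\Sigma^0_2$, and the outer $\exists\bar{d}\in\N^{n-k-1}$ (a number quantifier over the recursive domain of $\N$) preserves the class. Thus $\N\setminus\acl(\bar{b})\in\Sigma^0_2$, and combining with the first paragraph gives $\acl(\bar{b})\in\Sigma^0_2\cap\Pi^0_2=\Delta^0_2$.
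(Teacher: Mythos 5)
Your proof is correct and takes essentially the same route as the paper: both halves rest on the observation that, by model completeness, membership $x\in\acl(\bar{a})$ is uniformly $\Sigma^0_2$ via existential algebraic formulas, and both exploit finite-dimensionality by fixing a completion $\bar{c}$ of $\bar{b}$ to a basis of $\N$. The only difference is in packaging: the paper writes membership in $\acl(\bar{b})$ directly as the finite conjunction of the conditions $c_i\notin\acl(\bar{b},x,c_1,\dots,c_{i-1})$ (iterated exchange), obtaining $\Pi^0_2$ without any quantifier over elements, whereas you express non-membership as the existence of a tuple $\bar{d}$ with $\bar{c}\subseteq\acl(\bar{b},a,\bar{d})$; the two formulations are dual and yield the same $\Delta^0_2$ conclusion.
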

\begin{proof}
	Note that for any tuple $\bar{a}$, $d\in \acl(\bar{a})$ if and only if there exists an existential formula $\phi$ and natural number $n$ so that $\N\models \phi(\bar{a},d)\wedge \neg \exists^{n} x \phi(\bar{a},x)$. By model completeness of $T$, our restriction to existential formulas gives the full algebraic closure. This shows that $\{d,\bar{a}\mid d\in \acl(\bar{a})\}$ is a $\Sigma^0_2$ subset of $\N^{<\omega}$. 
	Thus for any tuple $\bar{a}$, $\acl(\bar{a})$ is $\Sigma^0_2$. 
	
	We extend $\bar{b}$ to a basis $\bar{b},c_1,c_2\ldots,c_k$ of $\N$. By repeatedly using the exchange property, $x\in \acl(\bar{b})$ if and only if $c_k\notin \acl(\bar{b},x,c_1,\ldots c_{k-1})\wedge \cdots \wedge c_1\notin \acl(\bar{b},x)$. As this is a conjunction of $\Pi^0_2$-conditions, $\acl(\bar{b})$ is also $\Pi^0_2$.
\end{proof}

Due to the previous two Lemmas, we now turn towards finding infinite $\Sigma^0_1$ subsets of models of model complete flat strongly minimal theories. This will rely heavily on the geometric assumption of flatness. Towards this end, we introduce the purely geometric construction of ping-pong sequences. We then show that ping-pong sequences can be found inside the algebraic closure of a generic tuple via a single formula. The core use of geometry is in showing that the ping-pong sequences must be injective, thus giving an infinite set. This will give us infinite $\Sigma^0_1$ sets. 

We note that this strategy hinges on a strong form of non-local-finiteness. In particular, we will have a single formula $\phi$ so that, starting from a finite tuple, repeatedly using $\phi$ to generate algebraicities will give us an infinite set. Note that there are disintegrated strongly minimal theories where this cannot be done even with algebraic closures being infinite, such as the constructions employed in \cite{KNS97,Ni99,HKS06}. Similarly, there are locally modular strongly minimal theories where this cannot be done, such as the theory of a vector space over $\mathbb{F}_5^{alg}$. Thus, this strategy requires the geometric assumption of flatness.

\section{Infinite formula-closures}\label{sec:PPS}

\subsection{Ping pong sequences}

In the context of a non-disintegrated pregeometry $G$, we describe a procedure for generating a sequence $(t_i : i<n)$, for some $n\in [0,\omega]$. We do this so each $t_{i+1}$ is interalgebraic with $t_i$ over either $X\cup \{a_1\}$ or $X\cup \{a_2\}$ depending on the parity of $i$. We think of $X$ as a ``net'' and the elements $a_1,a_2$ as paddles with which we ``hit'' $t_i$ to find $t_{i+1}$ (see Figure \ref{pingpongs}).

Choose arbitrarily a set $X\subseteq G$ and points $a_1,a_2\in G$ independent over $X$. For every $Y\subseteq G$ denote $\cl_X(Y) = \cl(X\cup Y)$.
Choose arbitrarily $t_1\notin \cl_X(a_1,a_2)$. For each $i\in\omega$, let $j_i\in \set{1,2}$ be such that $i\parity j_i \mt$. 
If possible, choose $t_{i+1}\in\cl_X(a_{j_i},t_i)\setminus \cl_X(a_{j_i})$ distinct from $t_i$. Call such a sequence a \emph{ping-pong sequence}, or PPS for short.

\begin{figure}[H]
\centering
\includegraphics[width=\linewidth]{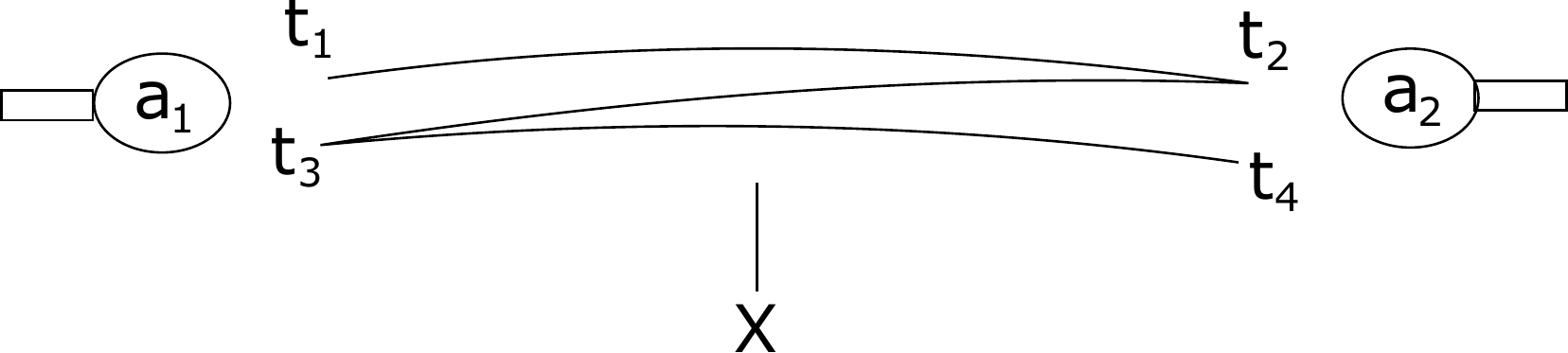}
\caption{}
\label{pingpongs}
\end{figure}

\begin{observation}
	For each $i\in \omega$, if $t_{i+1}$ exists, then $t_{i+1}\notin \cl_X(a_1,a_2)$.
\end{observation}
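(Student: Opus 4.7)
The plan is to prove this by induction on $i$, with the base case essentially being the choice of $t_1$, and the inductive step using only the Exchange Principle.

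For the base case $i=0$, the claim that $t_1 \notin \cl_X(a_1,a_2)$ is immediate from how $t_1$ was selected. So the work is in the inductive step: assume $t_i \notin \cl_X(a_1,a_2)$ and that $t_{i+1}$ exists; I want to show $t_{i+1} \notin \cl_X(a_1,a_2)$.

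Suppose for contradiction that $t_{i+1} \in \cl_X(a_1,a_2)$. By the definition of a PPS, $t_{i+1} \in \cl_X(a_{j_i}, t_i) \setminus \cl_X(a_{j_i})$, so in particular $t_{i+1} \in \cl_X(a_{j_i}\cup\{t_i\})$ while $t_{i+1} \notin \cl_X(\{a_{j_i}\})$. By the Exchange Principle applied over the set $X \cup \{a_{j_i}\}$, this forces $t_i \in \cl_X(a_{j_i}, t_{i+1})$. But by our contradiction hypothesis, $\cl_X(a_{j_i}, t_{i+1}) \subseteq \cl_X(a_1,a_2,t_{i+1}) = \cl_X(a_1,a_2)$, so $t_i \in \cl_X(a_1,a_2)$, contradicting the inductive hypothesis.

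I do not anticipate any serious obstacle here; the observation is a one-line application of exchange once the right element ($t_i$) is identified as the witness to move to the other side of the closure. The only subtlety is remembering that the PPS definition explicitly keeps $t_{i+1}$ outside $\cl_X(a_{j_i})$, which is precisely what enables the exchange step.
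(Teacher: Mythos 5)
Your proof is correct and follows essentially the same route as the paper: induction on $i$ with the base case given by the choice of $t_1$, and the inductive step a single application of the Exchange Principle over $X\cup\set{a_{j_i}}$ yielding $t_i\in\cl_X(a_{j_i},t_{i+1})\subseteq\cl_X(a_1,a_2)$, a contradiction. No issues.
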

\begin{proof}
	For $i=1$, $t_1$ was chosen so that $t_1\notin \cl_X(a_1,a_2)$. Assume for a moment there is some $i$ such that $t_i\notin \cl_X(a_1,a_2)$, but $t_{i+1}\in \cl_X(a_1,a_2)$. By construction, using the exchange property, $t_i\in\cl_X(a_{j_i},t_{i+1})\subseteq  \cl_X(a_1,a_2)$, which gives a contradiction. So $t_i\notin \cl_X(a_1,a_2)$ for every $i\in\omega$.
\end{proof}

\begin{lemma}
\label{lemma: PPS is injective}
In a flat pregeometry $G$, a PPS is injective. That is, if $t_{i_1}, t_{i_2}$ are elements of a PPS with $i_1\neq i_2$, then $t_{i_1}\neq t_{i_2}$.
\end{lemma}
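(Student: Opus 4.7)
The plan is to assume toward contradiction that the PPS is not injective, and fix $i_1 < i_2$ with $t_{i_1} = t_{i_2}$ and $N := i_2 - i_1$ minimal, so that $t_{i_1}, \ldots, t_{i_2-1}$ are pairwise distinct. By the finite character of $\cl$, I may replace $X$ with a finite subset witnessing all the closure memberships along this finite segment of the PPS; this makes all the closed sets below finite-dimensional while preserving every relevant PPS property, including the observation that $t_i \notin \cl_X(a_1, a_2)$.

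The small cases $N \in \set{2,3}$ are handled by pure exchange. For $N = 2$: applying exchange to both $t_{i_1+1} \in \cl_X(a_{j_{i_1}}, t_{i_1}) \setminus \cl_X(a_{j_{i_1}})$ and $t_{i_1} = t_{i_1+2} \in \cl_X(a_{j_{i_1+1}}, t_{i_1+1}) \setminus \cl_X(a_{j_{i_1+1}})$ places both $a_{j_{i_1}}$ and $a_{j_{i_1+1}}$, which are $a_1$ and $a_2$ in some order, into $\cl_X(t_{i_1}, t_{i_1+1})$, whence $t_{i_1} \in \cl_X(a_1, a_2)$, contradicting the observation. For $N = 3$ the ``wrap-around'' pair $\cl_X(a_{j_{i_1}}, t_{i_1})$ and $\cl_X(a_{j_{i_1+2}}, t_{i_1+2})$ share both their (common-parity) paddle and the element $t_{i_1} = t_{i_1+3}$, forcing them equal; a further application of exchange to the PPS step producing $t_{i_1+2}$ then forces $t_{i_1+1} \in \cl_X(a_2)$, again contradicting the observation.

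For $N \geq 4$, I apply flatness to $\Sigma := \set{E_j : i_1 \leq j < i_2}$ with $E_j := \cl_X(a_{j_j}, t_j) = \cl_X(a_{j_j}, t_{j+1})$. Each $E_j$ is closed of dimension $\dm\cl(X) + 2$, and $\bigcup \Sigma$ is contained in $\cl_X(a_1, a_2, t_{i_1})$ while containing the independent triple $\set{a_1, a_2, t_{i_1}}$, so $\dm(\bigcup \Sigma) = \dm\cl(X) + 3$ and flatness demands $\Delta(\Sigma) \geq \dm\cl(X) + 3$. I will argue the opposite inequality $\Delta(\Sigma) \leq \dm\cl(X) + 2$, yielding the desired contradiction. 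In the generic situation (all $E_j$'s distinct apart from the forced coincidence $E_{i_1} = E_{i_2-1}$ that arises when $N$ is odd), for $|s| \geq 2$ the intersection $\bigcap_{j \in s} E_j$ has dimension $\dm\cl(X) + 1$ precisely when either (a) every $j \in s$ shares the same paddle index $j_j = k$, contributing $\cl_X(a_k)$, or (b) $s = \set{j, j+1}$ is a cyclically consecutive pair, contributing $\cl_X(t_{j+1})$; all other intersections equal $\cl(X)$. Plugging these dimensions into the inclusion-exclusion sum and invoking the identity $\sum_{k=0}^m (-1)^k \binom{m}{k} = 0$ on the same-paddle index subsets of each parity, the contributions collapse to $\Delta(\Sigma) = \dm\cl(X) + 2$.

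The main obstacle is the non-generic case of additional coincidences $E_j = E_k$, which could inflate intersection dimensions and spoil the computation above. I expect such coincidences to be ruled out by cascading exchange: an adjacent coincidence $E_j = E_{j+1}$ immediately puts $\set{a_1, a_2}$ in a common dim-$2$ set, forcing $t_{j+1} \in \cl_X(a_1, a_2)$; a more distant coincidence forces an extra sequence element onto some PPS line $\cl_X(a_{j_j}, t_j)$, which iterated exchanges again push into $\cl_X(a_1, a_2)$, or else collapse the PPS cycle to a strictly shorter length $i_2' - i_1' < N$, contradicting the minimality of $N$ via induction on $N$.
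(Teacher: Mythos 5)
Your overall strategy is the same as the paper's: take a minimal non-injective cycle, form the closed sets $E_i=\cl_X(a_{j_i},t_i)=\cl_X(a_{j_i},t_{i+1})$, and play the inclusion-exclusion quantity $\Delta(\Sigma)$ against $\dm\left(\bigcup\Sigma\right)\geq \dm(X)+3$ via flatness; your arithmetic in the ``generic'' case (giving $\Delta(\Sigma)=\dm(X)+2$) matches the paper's computation. The genuine gap is that the ``generic situation'' is exactly what has to be proved, and it does not follow from the $E_i$ being pairwise distinct. In a non-modular (in particular flat) pregeometry, two distinct closed sets of dimension $\dm(X)+2$ can meet in a closed set of dimension $\dm(X)+1$ other than $\cl_X(a_1)$, $\cl_X(a_2)$, or $\cl_X(t_{i+1})$: for non-adjacent $m,k$ of opposite parity, $E_m=\cl_X(a_1,t_m)$ and $E_k=\cl_X(a_2,t_k)$ could a priori share a point $t\notin\cl_X(a_1,a_2)$ (then $E_m=\cl_X(a_1,t)$ and $E_k=\cl_X(a_2,t)$ are still distinct, with no coincidence $E_j=E_{j'}$ anywhere), and likewise $t_{i+1}$ could lie in some non-adjacent $E_k$, inflating a triple intersection. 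Your final paragraph only treats equalities $E_j=E_k$, so these configurations are untouched, and without knowing all intersection dimensions the binomial-identity computation of $\Delta(\Sigma)$ has no justification. Excluding them is the heart of the paper's proof (its Claim): from any $t\in E_m\cap E_k\setminus\cl_X(a_1,a_2)$ with $\set{m,k}$ not cyclically adjacent one splices a strictly shorter non-injective PPS --- either $t_1,\dots,t_m,t_{k+1},\dots,t_{l+1}$ or $t_1,\dots,t_m,t,t_{k+1},\dots,t_{l+1}$ depending on the parities of $m$ and $k$ --- contradicting minimality; this needs the parity case analysis and the wrap-around cases. You do gesture at ``collapse the PPS cycle to a strictly shorter length,'' which is the right instinct, but it is attached to the wrong hypothesis (coincidences among the $E_j$) and no splicing construction is given, so the key step is missing.

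Two smaller points. When the cycle length is odd, the forced equality of the first and last $E$'s makes the wrap pair's intersection have dimension $\dm(X)+2$ and degenerates every index set containing both endpoints, so your clean count must be redone; the paper sidesteps this by bookkeeping with the finite sets $F_i=X_0\cup\set{a_{j_i}}$, for which the discrepancies between $\dm\left(\bigcap_{i\in s}E_i\right)$ and $\left|\bigcap_{i\in s}F_i\right|$ cancel, yielding $\Delta(\Sigma)=\left|\bigcup_i F_i\right|$ uniformly and with no separate small cases. Finally, your $N=2$ argument applies exchange to conclude $a_{j_{i_1}}\in\cl_X(t_{i_1},t_{i_1+1})$, which requires $t_{i_1+1}\notin\cl_X(t_{i_1})$; the PPS definition only guarantees $t_{i_1+1}\neq t_{i_1}$ and $t_{i_1+1}\notin\cl_X(a_{j_{i_1}})$, so the degenerate possibility $t_{i+1}\in\cl_X(t_i)$ must be addressed rather than assumed away (the same issue affects your $N=3$ case).
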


\begin{proof}
A consecutive subsequence of a PPS is also a PPS, so it is enough to prove the statement for $i_1=1$. Assume to the contrary that the statement is false. Let $l$ be minimal such that there exists a PPS $t_1,\dots,t_{l+1}$ with $t_1=t_{l+1}$. Fix a PPS $t_1,\dots, t_{l+1}$ generated by $X$, $a_1$, $a_2$ in a flat pregeometry $G$ witnessing this.

Fix $X_0\subseteq X$ a basis for $X$. For each $i< l+1$, let $F_i = X_0\cup\set{a_{j_i}}$ and let $E_i=\cl_X(a_{j_i}t_i) = \cl(F_i\cup\set{t_i})$. Note that also $E_i=\cl_X(a_{j_i}t_{i+1})$ since $t_i$ and $t_{i+1}$ are interalgebraic over $X\cup \{a_{j_i}\}$.

Let $\Sigma = \setcol{E_i}{i<l+1}$. We will find an upper bound on $\dm(\bigcup \Sigma)$ using flatness of $G$. For this, we need to understand the dimensions of intersections of the form $\bigcap_{i\in s} E_i$, for $s$ a set of indices.

\medskip
\noindent\textbf{Claim.} For a set of indices $s\subseteq\set{1,\dots, l}$ with $|s|\geq 2$ and not of the form $\set{i,i+1}$ or $\set{1,l}$,
\[
\bigcap_{i\in s} E_i = \cl\left(\bigcap_{i\in s} F_i\right)=
\begin{cases}
\cl(X\cup\set{a_{j}}) \text{ if }j\in \set{1,2}\text{ and } \forall i\in s\, i \parity a_j \mt \\
\cl(X) \text{ if there are }i,j\in s\text{ with }i\not\parity j \mt
\end{cases}
\]

\begin{proof}
Fix some $1\leq m<k< l+1$. We first show that $E_m\cap E_k \subseteq  \cl_X(a_1,a_2)$ unless $\set{k,m}$ equals $\set{i,i+1}$ or $\set{1,l}$.
Towards the contrapositive, let $E_m\cap E_k \nsubseteq  \cl_X(a_1,a_2)$. We will show that either $k=m+1$, or $m=1$ and $k=l$. Let $t\in {E_k\cap E_m\setminus \cl_X(a_1,a_2)}$. If $t=t_m$, then $t_m,\dots, t_k, t_m$ is a PPS, contradicting minimality of $l$ unless $m=1$ and $k=l$, so we may assume $t\neq t_m$. We split into cases, depending on whether $m$ and $k$ have the same parity. Without loss of generality, assume $m\parity 1 \mt$.

\noindent\underline{Case $k\parity 1 \mt$}: Then $t\in E_k = \cl_X(a_1,t_{k+1})$, implying $t_{k+1}\in\cl_X(a_1,t)\subseteq E_m$. Unless $t_{k+1} = t_m$, which by minimality of $l$ can only happen if $m=1$ and $k=l$, the sequence $t_1,\dots, t_m, t_{k+1},\dots, t_{l+1}$ is a PPS contradicting the minimality of $l$.

\noindent\underline{Case $k\parity 0 \mt$}: We may assume $t\neq t_{k+1}$, for otherwise by exchange $E_m = \cl_X({a_1,t}) = E_{k+1}$ and $t_{k+1}, t_{m+1},\dots t_k, t_{k+1}$ is a PPS, which by minimality of $l$ implies $m=1$ and $k=l$.
Since $t\in E_k = \cl_X({a_2,t_{k+1}})$ implies ${t_{k+1}\in \cl_X({a_2,t})}$, as $t\in E_m$ and $t\neq t_{k+1}$, the sequence $t_1,\dots, t_m, t, t_{k+1},\dots, t_{l+1}$ is a PPS. By minimality of $l$, this means that $k=m+1$.

\smallskip
By what we've shown, for a set $s\subseteq \set{1,\dots, l}$ of size at least $2$ that is not of the form $\set{i,i+1}$ or $\set{1,l}$, there are $m,k\in s$ such that $E_m\cap E_k \subseteq  \cl_X(a_1,a_2)$. Thus $\bigcap_{i\in s}E_i = \cl_X(a_1,a_2)\cap \bigcap_{i\in s}E_i = \bigcap_{i\in s}  \cl_X(a_1,a_2)\cap E_i $.

For every $i$, as $\cl_X({a_{j_i}}) \subseteq E_i\cap \cl_X({a_1,a_2})\subsetneq E_i$ and $\dm(E_i)=\dm(X\cup\set{a_{j_i}})+1$, it must be that $E_i\cap \cl_X({a_1,a_2}) = \cl_X({a_{j_i}})$. Thus, taking $s$ as in the claim we have
\[
\bigcap_{i\in s} E_i = \bigcap_{i\in s} \cl_X(a_{j_i}) = \cl_X\left(\bigcap_{i\in s}\set{a_{j_i}}\right) = \cl\left(\bigcap_{i\in s} F_i\right).
\]
The penultimate equality is by $a_1$ and $a_2$ being independent over $X$.
\end{proof}

We now need to address sets of indices $s$ that do not fall under the claim. For $s=\set{i}$, where $i<l+1$, we already know $\dm(E_i) = |F_i|+1$.

Consider $s=\set{i,i+1}$ for some $i< l$. Because $t_{i+1},a_1,a_2$ are independent over $X$, we know $a_{j_i}\notin E_{i+1}$, so in particular $E_i\cap E_{i+1}\neq E_i$. Now, ${X\cup \set{t_{i+1}}\subseteq E_i\cap E_{i+1}\subsetneq E_i}$ and $\dm(E_i) = \dm(X\cup\set{t_{i+1}})+1$, hence $E_i\cap E_{i+1} = \cl(X\cup\set{t_{i+1}})$. Thus, $\dm(E_i\cap E_{i+1}) = \dm(X) + 1 = |F_i\cap F_{i+1}|+1$

Consider $s=\set{1,l}$. If $l\parity 2 \mt$, because $t_{l+1} = t_1$, then $E_l = \cl_X({a_2,t_l}) = \cl_X({a_2,t_{l+1}}) = \cl_X({a_2,t_1})$ and $E_1 = \cl_X({a_1,t_1})$, so $\dm(E_l\cap E_1)= |F_l\cap F_1|+1$  is proved exactly as the case $s=\set{i,i+1}$. 
If $l\parity 1 \mt$, then $E_1 = E_l = \cl_X({a_1,t_1})$ and $F_1 = F_l$, so again $\dm(E_1\cap E_l) = \dm(E_1) = |F_1| + 1 = |F_1\cap F_l| + 1$.

Due to the above, we will see some cancellation in the alternating sum $\Delta(\Sigma)$, which we compute as the bound to $d(\bigcup_{i=1}^l E_i)$. For each $i< l$ we have $\dm(E_i) - {\dm(E_i\cap E_{i+1})} = |F_i| - |F_i\cap F_{i+1}|$, and for $i=l$, we have $\dm(E_l) - \dm(E_1\cap E_l) = |F_l| - |F_1\cap F_l|$. Therefore, using this observation and the claim, and then the inclusion-exclusion principle,
\begin{align*}
\Delta(\Sigma)=
\sum_{\emptyset\neq s\subseteq\set{1,\dots, l}} (-1)^{|s|+1} \dm\left(\bigcap_{i\in s} E_i\right) = \sum_{\emptyset\neq s\subseteq\set{1,\dots, l}} (-1)^{|s|+1} \left|\bigcap_{i\in s} F_i\right| = \left|\bigcup_{i=1}^l F_i\right|.
\end{align*}
But $\bigcup_{i=1}^l F_i = X_0\cup\set{a_1,a_2}$, so by flatness, $\dm\left(\bigcup_{i=1}^l E_i\right) \leq \left|X_0\cup\set{a_1,a_2}\right| = \dm(X\cup\set{a_1,a_2})$. Since $t_1\in \bigcup_{i=1}^l E_i$, this is a contradiction to $t_1\notin\cl_X({a_1,a_2})$, which proves the lemma.
\end{proof}

\subsection{Formula-closures}

We define the formula-closure of a set.

\begin{definition}
	Let $\mathcal{N}$ be a model, $X\subseteq \mathcal{N}$ a set, and $\psi(x_1,\dots, x_m)$ some formula. Let $X^0_{\psi} = X$ and recursively define 
	\[
	X^{i+1}_{\psi} = \bigcup_{\substack{a_1,\dots,a_{m-1}\in X^i_{\psi} \\ 0\leq j< m}} \psi(a_1,\dots, a_j,\mathcal{N},a_{j+1},\ldots,a_{m-1}).
	\]
	We define the $\psi$-closure of $X$ in $\mathcal{N}$ to be $\Lambda^{\mathcal{N}}_{\psi}(X) = \bigcup_{i\in\omega} X^i_{\psi}$.
\end{definition}

We now turn to choosing a formula $\phi$ so that $\Lambda^\N_\phi$ is algebraic and, in many cases, will be infinite. 

\begin{definition}
	In a pregeometry, a \emph{cirucit} is a dependent tuple whose every proper subtuple is independent.
\end{definition}

\begin{definition}\label{def:phi}
	For $T$ a strongly minimal theory whose pregeometry is non-disintegrated, we let $\phi_0(x_1,\ldots x_m)$ be a formula which witnesses the smallest circuit of size $>2$ in the pregeometry of $T$, i.e., $\varphi_0$ holds on some circuit of size $m$ in the countable saturated model of $T$, but not on a generic $m$-tuple. The existence of $\varphi_0$ follows from non-disintegration.
	
	We let $\phi$ be a formula which witnesses the smallest circuit of size $>2$ in the pregeometry of $T$ so that $\phi$ makes each coordinate algebraic over the others. That is, for each $1\leq j\leq m$, $T\models \forall \bar{x}\exists^{<\infty}y \phi(x_1,\dots, x_{j-1},y,x_{j+1},\ldots,x_m)$. Note that choosing $\phi(\bar{x})$ to be $\phi_0(\bar{x})\wedge \bigwedge_{j\leq m}\exists^{<\infty}y \phi_0(x_1,\dots, x_{j-1},y,x_{j+1},\ldots,x_m)$ suffices, as every circuit satisfying $\varphi_0$ also satisfies $\varphi$. That our choice of $\varphi$ can be expressed as a first order formula follows from strong minimality of $T$.
	
	We define $n$ to be $m-1$. This is the dimension of the smallest circuit of size $>2$, which will be more natural to use below than the size of the circuit.
\end{definition}

The next observation and lemma are in terms of $T$, $\varphi$, $n$ as in Definition \ref{def:phi}.

\begin{observation}\label{LambdaIsAlgebraic}
	By choice of $\varphi$, since $\varphi(a_1,\dots, a_{i-1},\mathcal{N},a_{i+1},\dots,a_m)$ is always finite in $\mathcal{N}\models T$, for every $X\subseteq \mathcal{N}$ we have $\Lambda^{\mathcal{N}}_{\varphi}(X)\subseteq \acl_{\mathcal{N}}(X)$.
\end{observation}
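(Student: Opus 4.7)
The plan is to prove this by a straightforward induction on $i$ that $X^i_\varphi \subseteq \acl_{\mathcal{N}}(X)$ for every $i\in \omega$. Since $\Lambda^{\mathcal{N}}_\varphi(X) = \bigcup_{i\in\omega} X^i_\varphi$ and algebraic closure commutes with directed unions, the conclusion then follows immediately. The base case $X^0_\varphi = X \subseteq \acl_{\mathcal{N}}(X)$ is the inclusion $X\subseteq \cl(X)$ from the definition of a closure operator.

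For the inductive step, fix $b \in X^{i+1}_\varphi$. By the defining recursion there exist some $0\leq j < m$ and $a_1,\ldots,a_{m-1}\in X^i_\varphi$ such that $\mathcal{N}\models \varphi(a_1,\ldots,a_j,b,a_{j+1},\ldots,a_{m-1})$. The defining property of $\varphi$ in Definition \ref{def:phi} is exactly that the set $\varphi(a_1,\ldots,a_j,\mathcal{N},a_{j+1},\ldots,a_{m-1})$ is finite, so $b \in \acl_{\mathcal{N}}(a_1,\ldots,a_{m-1})$. By the inductive hypothesis each $a_k$ lies in $\acl_{\mathcal{N}}(X)$, so by idempotence of $\acl_{\mathcal{N}}$ (the $\cl(B) = \cl(\cl(B))$ clause in the axioms of a pregeometry) we conclude $b\in \acl_{\mathcal{N}}(X)$, finishing the induction.

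There is no real obstacle here; the observation is essentially a restatement of the algebraicity clause built into the choice of $\varphi$. The only point worth emphasizing is that the algebraicity applies uniformly in the choice of position $j$ that is left free, which is why we insisted in Definition \ref{def:phi} on $\varphi$ forcing every coordinate (not merely some fixed one) to be algebraic over the others. The real work of the section will come afterwards, in showing that $\Lambda^{\mathcal{N}}_\varphi(X)$ is not just algebraic but can be forced to be infinite via ping-pong sequences — this observation merely locates $\Lambda^{\mathcal{N}}_\varphi(X)$ inside $\acl_{\mathcal{N}}(X)$, which is what is needed to combine with Lemmas \ref{GoingDownTrick} and \ref{FDClosedSetsAreDelta2}.
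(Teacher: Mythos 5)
Your induction is correct and is exactly the routine argument the paper leaves implicit when it states this as an observation following directly from the algebraicity clause in Definition \ref{def:phi} (each $X^i_\varphi$ stays inside $\acl_{\mathcal{N}}(X)$ by finiteness of the fibers of $\varphi$ plus idempotence of $\acl$). No issues; this matches the paper's (unstated) reasoning.
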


\begin{lemma}
	\label{lemma: infinite PPS over indepednent n+1-tuple}
	Let $\N\models T$ and suppose that the pregeometry of $T$ is flat.
	 If $Y\subseteq \mathcal{N}$ contains $n+1$ independent points, then $\Lambda^{\mathcal{N}}_{\varphi}(Y)$ is infinite.
\end{lemma}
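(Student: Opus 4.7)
The plan is to build an infinite ping-pong sequence $(t_i)_{i\geq 1}$ sitting inside $\Lambda^\N_\varphi(Y)$ and then invoke Lemma \ref{lemma: PPS is injective} to conclude that it enumerates infinitely many distinct points. Since $\varphi$ witnesses a circuit of size greater than $2$, we have $n\geq 2$. Fix $n+1$ independent points $y_1,\dots,y_{n+1}\in Y$ and set $X=\{y_1,\dots,y_{n-2}\}$, $a_1=y_{n-1}$, $a_2=y_n$, $t_1=y_{n+1}$. Then $a_1,a_2$ are independent over $X$ and $t_1\notin\cl_X(a_1,a_2)$, so the PPS is initialized with $t_1\in Y\subseteq Y^0_\varphi$.

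For the inductive step, assume $t_i\in Y^{i-1}_\varphi$ is the current last term of the partial PPS. By the observation preceding Lemma \ref{lemma: PPS is injective}, $t_i\notin\cl_X(a_1,a_2)$, so $(y_1,\dots,y_{n-2},a_{j_i},t_i)$ is an independent $n$-tuple. Since every independent $n$-tuple realizes the generic $n$-type, this tuple has the same type as the first $n$ coordinates of a circuit witnessing $\varphi$; so by type-homogeneity there is a $t_{i+1}$ in a monster model for which $\varphi(y_1,\dots,y_{n-2},a_{j_i},t_i,t_{i+1})$ holds and the full tuple $(y_1,\dots,y_{n-2},a_{j_i},t_i,t_{i+1})$ is a circuit. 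The algebraicity clause built into the definition of $\varphi$ (Definition \ref{def:phi}) forces $t_{i+1}\in\acl(y_1,\dots,y_{n-2},a_{j_i},t_i)$, and since $\N$ is elementary in the monster this algebraic closure lies inside $\N$. Thus $t_{i+1}\in\N$, and since all of its defining parameters are in $Y^{i-1}_\varphi$, also $t_{i+1}\in Y^i_\varphi$.

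Circuit-ness of $(y_1,\dots,y_{n-2},a_{j_i},t_i,t_{i+1})$ delivers the PPS requirements automatically: $t_{i+1}\neq t_i$ since circuit elements are distinct, $t_{i+1}\in\cl_X(a_{j_i},t_i)$ by being algebraic over these parameters, and $t_{i+1}\notin\cl_X(a_{j_i})$ since the proper subtuple $(y_1,\dots,y_{n-2},a_{j_i},t_{i+1})$ is independent. Hence $(t_i)_{i\geq 1}$ is an infinite PPS contained in $\Lambda^\N_\varphi(Y)$, and Lemma \ref{lemma: PPS is injective} shows the $t_i$ are pairwise distinct. The main subtle point to watch is upgrading a bare $\varphi$-witness to one that actually forms a circuit with the parameters, since this is what ensures both $t_{i+1}\notin\cl_X(a_{j_i})$ and $t_{i+1}\neq t_i$; this upgrade is precisely what the type-homogeneity step yields, relying on $\N\preceq$ monster to keep the witness inside $\N$.
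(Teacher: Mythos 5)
Your proof is correct and follows essentially the same route as the paper: start a PPS from the independent $(n+1)$-tuple, use that all independent $n$-tuples realize the generic type to extend $(X,a_{j_i},t_i)$ by a $\varphi$-witness forming a circuit, and conclude via Lemma \ref{lemma: PPS is injective}. Your extra care in passing through the monster model and using algebraicity of $\varphi$ plus $\N\preceq$ monster to keep $t_{i+1}$ inside $\N$ is just an explicit spelling-out of the step the paper states tersely.
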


\begin{proof}
	Let $t_1,a_1,a_2,y_1,\dots,y_{n-2}\in Y$ be independent. Beginning with $t_1$ and using $a_1$, $a_2$, and $X = \set{y_1,\dots, y_{n-2}}$, we define a PPS contained in $\Lambda^{\mathcal{N}}_{\varphi}(Y)$. Assuming we were able to construct $t_1,\dots, t_i\notin \acl(X\cup\set{a_1,a_2})$, we only need to show that an appropriate $t_{i+1}$ exists.
	
	Since $T$ is strongly minimal, every independent $n$-tuple in $\mathcal{N}$ has the same type. In particular, by choice of $\varphi$, the independent $n$-tuple $(t_i,a_{j_i},X)$ can be extended to a circuit which is an instance of $\varphi$ by an element $t_{i+1}$. By virtue of $(t_i,a_{j_i}, X, t_{i+1})$ being a circuit, $t_{i+1}\in \acl({X}\cup\set{a_{j_i},t_i})\smallsetminus \acl(X\cup\set{a_{j_i}})$.
	
	We get a PPS $(t_i : i\in\omega)$, which by Lemma \ref{lemma: PPS is injective} contains infinitely many elements. By construction, clearly $t_i\in \Lambda^{\mathcal{N}}_{\varphi}(Y)$ for every $i\in\omega$.
\end{proof}

\section{Recursion theoretic consequences}\label{sec5}

Fix $T$ a model complete flat strongly minimal theory. We let $\phi$ and $n$ be as defined in Definition \ref{def:phi}. Suppose that $\N$ is a finite-dimensional recursive model of $T$. In this section, we will show that every model $\M\preceq \N$ of dimension at least $n$ has a recursive presentation. We first show the result for models $\M$ of dimensions at least $n+1$.

\begin{theorem}\label{thm:DownTon+1}
	Suppose that $\N$ is a finite dimensional recursive model of $T$. Then every model of $T$ with dimension $k\in [n+1,\text{dim}(\N)]$ has a recursive presentation. 
 \end{theorem}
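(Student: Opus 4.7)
The plan is to assemble the tools developed in Sections 3 and 4 into a direct construction. Given $k \in [n+1, \dm(\N)]$, I would choose any $k$-element independent tuple $\bar{b} \in \N$ (possible since $k \leq \dm(\N)$) and set $\M := \acl_\N(\bar{b})$. By Fact \ref{TVFact}, $\M \preceq \N$ and $\dm(\M) = k$, so since the $k$-dimensional model of $T$ is unique up to isomorphism, it suffices to produce a recursive copy of this particular $\M$. The strategy is to invoke Lemma \ref{GoingDownTrick}, whose two inputs I need to verify: that $\M$ is $\Delta^0_2$ inside $\N$, and that $\M$ contains some infinite $\Sigma^0_1$ set.

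The $\Delta^0_2$ condition is immediate from Lemma \ref{FDClosedSetsAreDelta2}, since $\bar{b}$ is a generic tuple in the finite-dimensional model $\N$ and $T$ is model complete. For the $\Sigma^0_1$ set I take $A := \Lambda^{\N}_{\varphi}(\bar{b})$ with $\varphi$ as in Definition \ref{def:phi}. Observation \ref{LambdaIsAlgebraic} gives $A \subseteq \acl_\N(\bar{b}) = \M$, and because $\bar{b}$ is itself an independent tuple of length $k \geq n+1$, Lemma \ref{lemma: infinite PPS over indepednent n+1-tuple} yields that $A$ is infinite. Lemma \ref{GoingDownTrick} then delivers a recursive copy of $\M$.

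The one subtle verification is that the formula-closure $A$ is actually $\Sigma^0_1$ and not merely arithmetical. Here model completeness does the work: $\varphi$ is $T$-equivalent to an existential formula $\varphi^{\exists}$, and evaluating an existential formula in the recursive structure $\N$ is a $\Sigma^0_1$ task. Thus for each enumerated tuple $\bar{a} \in X^i_\varphi$ one can r.e.\ enumerate the points $y \in \N$ with $\N \models \varphi(\bar{a},y)$ in any coordinate, so each $X^{i+1}_\varphi$ is r.e.\ uniformly in an r.e.\ index for $X^i_\varphi$, and the union $A = \bigcup_i X^i_\varphi$ is $\Sigma^0_1$ in $\N$.

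I do not anticipate a significant obstacle beyond this bookkeeping: the real recursion-theoretic content sits in Lemma \ref{GoingDownTrick} and the real geometric content sits in Lemma \ref{lemma: PPS is injective}. The essence of the present theorem is just the observation that once $k \geq n+1$, the independent tuple $\bar{b}$ already contains enough independent points to seed a ping-pong sequence, which produces the required infinite $\Sigma^0_1$ subset of the candidate submodel. The genuinely harder regime---dimension exactly $n$---is outside the scope of this theorem and will presumably require a separate argument, since Lemma \ref{lemma: infinite PPS over indepednent n+1-tuple} really does need $n+1$ independent seeds.
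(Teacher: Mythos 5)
Your proposal is correct and follows essentially the same route as the paper: fix a generic $k$-tuple, get $\Delta^0_2$-ness of its algebraic closure from Lemma \ref{FDClosedSetsAreDelta2}, get an infinite $\Sigma^0_1$ subset from $\Lambda^{\N}_{\varphi}$ via Lemma \ref{lemma: infinite PPS over indepednent n+1-tuple} and model completeness, and conclude with Lemma \ref{GoingDownTrick}. The only cosmetic difference is that the paper observes $\varphi$ defines a recursive set (both it and its negation being existential), while you only use that it is existentially definable, which indeed suffices for the enumeration.
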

\begin{proof}
	Fix a generic $k$-tuple $\bar{x}$ in $\N$. By Lemma \ref{FDClosedSetsAreDelta2}, $\acl(\bar{x})$ is a $\Delta^0_2$ subset of $\N$. By model completeness of $\N$, $\phi$ defines a recursive subset of $\N^{n+1}$, as both it and its negation are existentially defined by model completeness. It follows that $\Lambda_\phi^\N(\bar{x})$ is a $\Sigma^0_1$ subset of $\N$. By Lemma \ref{lemma: infinite PPS over indepednent n+1-tuple}, this $\Sigma^0_1$-set is infinite. It follows from Lemma \ref{GoingDownTrick} that $\acl(\bar{x})$, which is isomorphic to the $k$ dimensional model of $T$, has a recursive presentation.
\end{proof}

Our next goal is to extend this result also to the $n$-dimensional model of $T$. 
For what follows, it is convenient to note that not only is $\Lambda_\phi^{\N}(X)$ recursively enumerable for a given finite $X$, but the sequence $(X_\phi^i)_{i\in \omega}$ is uniformly recursive in $X$.

\begin{lemma}\label{FinLambdaIsRE}
	The set of triples $(y,X,i)$ so that $y\in X_\phi^i$, where $X$ is a finite set given by canonical index, is recursive. It follows that the set of $X$ for which $\Lambda_\phi^{\N}(X)$ is finite is a recursively enumerable set.
\end{lemma}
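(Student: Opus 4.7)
The plan is to compute the sets $X^i_\phi$ effectively from $X$ and $i$. The key enabling fact is that, because $\N$ is a recursive model of the model complete theory $T$, every first order formula defines a recursive relation on $\N$, uniformly in parameters. Indeed, by model completeness both $\psi(\bar x)$ and $\neg\psi(\bar x)$ are equivalent modulo $T$ to existential formulas; in the recursive model $\N$ these existential definitions can each be r.e.\ enumerated, and exactly one must succeed on any given tuple, so the truth of $\psi$ is decidable uniformly in $\bar x$.

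With this in hand, I would prove the first assertion by induction on $i$, showing that $X^i_\phi$ is uniformly computable (as a canonical index of a finite set) from $X$ and $i$. The base case $i=0$ is given. For the step, suppose we have computed the finite set $X^s_\phi$. To compute $X^{s+1}_\phi$, iterate over each tuple $\bar a\in (X^s_\phi)^{m-1}$ and each coordinate $0\leq j<m$, and enumerate the finite set $\phi(a_1,\dots,a_j,\N,a_{j+1},\dots,a_{m-1})$. By the choice of $\phi$ this set is finite, so the first order statement $\exists^{\leq M}y\,\phi(a_1,\dots,a_j,y,a_{j+1},\dots,a_{m-1})$ is true in $\N$ for some $M\in\omega$; by the key fact above we can search, uniformly in $\bar a$, for the least such $M$. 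Then we enumerate $y\in\N$ and recursively decide $\phi(a_1,\dots,a_j,y,a_{j+1},\dots,a_{m-1})$ until exactly $M$ witnesses are found. Taking the union over all $\bar a$ and $j$, together with $X^s_\phi$, yields $X^{s+1}_\phi$ as a canonical index, proving the first claim.

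For the second claim, observe that the sequence $X^0_\phi\subseteq X^1_\phi\subseteq \cdots$ is monotone increasing, so $\Lambda^\N_\phi(X)$ is finite if and only if it stabilizes at some finite stage, which occurs exactly when $X^i_\phi=X^{i+1}_\phi$ for some $i$. By the first claim we can compute these canonical indices and check equality of finite sets, so the set of $X$ with $\Lambda^\N_\phi(X)$ finite is enumerated by searching for the first such $i$: if it exists the search halts, otherwise it runs forever, so this set is r.e.

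The main obstacle is entirely in the first claim, specifically in computing $X^{s+1}_\phi$ from $X^s_\phi$. Enumerating the set $\phi(a_1,\dots,\N,\dots,a_{m-1})$ is trivially r.e., but recognizing when all the finitely many solutions have been found requires an effective upper bound on their number. The observation that recursive models of model complete theories decide every first order formula supplies exactly this bound via the recursive search for the least valid $M$.
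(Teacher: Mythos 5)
Your overall strategy is the same as the paper's: compute $X^{i+1}_\phi$ from $X^i_\phi$ by determining, for each tuple, exactly how many solutions of $\phi$ must be found and then enumerating until that many appear, and then observe that finiteness of $\Lambda^{\N}_\phi(X)$ is equivalent to the $\Sigma^0_1$ condition $X^i_\phi=X^{i+1}_\phi$ for some $i$. The second half of your argument is fine. But there is a genuine gap in the way you obtain the solution counts. Your key fact --- that a fixed first order formula defines a recursive relation in the recursive model $\N$, uniformly in the parameters --- is correct \emph{formula by formula}: for each fixed $\psi$ one fixes, non-uniformly, existential formulas equivalent over $T$ to $\psi$ and to $\neg\psi$ and enumerates witnesses. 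Your algorithm, however, must decide the statements $\exists^{\leq M}y\,\phi(a_1,\dots,a_j,y,\dots,a_{m-1})$ for an a priori unbounded range of $M$ (your only termination argument is the per-tuple finiteness of the solution set, which gives no bound valid across all tuples). To run this search inside a single Turing machine you would need to compute, uniformly in $M$, existential formulas equivalent to $\exists^{\leq M}y\,\phi$; model completeness guarantees such formulas exist for each $M$, but not that they can be found effectively from $M$, since $T$ is not assumed decidable or even recursively axiomatizable and $\N$ is only assumed recursive, not decidable. Note also that the direction you actually need --- confirming that \emph{at most} $M$ solutions exist, i.e.\ verifying the negation of an existential statement --- is exactly the direction that is not uniformly existential, so the gap sits at the crucial step.

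The repair is short and is precisely how the paper proceeds: since $T\models \forall\bar{x}\,\exists^{<\infty}y\,\phi(x_1,\dots,x_{j-1},y,x_{j+1},\dots,x_m)$, compactness yields a single uniform bound $K$ with $T\models \forall\bar{x}\,\exists^{<K}y\,\phi(\cdots)$. Then only the finitely many formulas $\exists^{=L}y\,\phi$ for $L<K$ are ever needed, each of which defines a recursive set by model completeness (non-uniformity across finitely many formulas is harmless), so the exact number of solutions for each tuple is computable and your enumeration goes through. With that one compactness observation inserted, your proof coincides with the paper's.
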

\begin{proof}
	\sloppy
	Since $\phi$ makes each coordinate algebraic over the others, that is  $T\models \forall \bar{x}\exists^{<\infty}y \phi(x_1,\dots, x_{j-1},y,x_{j+1},\ldots,x_m)$, there is a uniform bound $K$ so that $T\models \forall \bar{x}\exists^{<K}y \phi(x_1,\dots, x_{j-1},y,x_{j+1},\ldots,x_m)$. By model completeness, each formula $\exists^{=L}y\phi(x_1,\dots, x_{j-1},y,x_{j+1},\ldots,x_m)$ is a recursive set. Thus, from the finite set $X_\phi^i$, we can determine for each tuple $\bar{x}$ exactly how many $y$ we need to find to include in $X_\phi^{i+1}$. Thus it is recursive to find all such $y$, and thus to find $X_\phi^{i+1}$. For the second statement, note that $\Lambda_\phi^{\N}(X)$ is finite if and only if there exists an $i$ so that $X_\phi^i=X_\phi^{i+1}$, which is a $\Sigma^0_1$ condition.
\end{proof}

\begin{lemma}\label{DimensionnHaveSigma1subsets}
	Assume $\N$ is a recursive model of $T$ of dimension $\geq n+1$. Let $X\subseteq \N$ be an algebraically closed subset of dimension $n$. Then if $X$ is infinite, it contains an infinite $\Sigma^0_1$ set.
\end{lemma}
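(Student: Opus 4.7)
The plan is to produce an infinite $\Sigma^0_1$ subset of $X$ via a dichotomy hinged on whether any single element of $X$, when added to a basis of $X$, already forces the $\phi$-closure to be infinite.

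First I would fix a basis $\bar{b}$ of $X$, so that $X=\acl(\bar{b})$ and $|\bar{b}|=n$, and then split into two cases.

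In the first case, suppose there exists some $d\in X$ with $\Lambda^{\N}_{\phi}(\bar{b}\cup\set{d})$ infinite. Since $d\in X=\acl(\bar{b})$, we have $\acl(\bar{b}\cup\set{d})=X$, so by Observation \ref{LambdaIsAlgebraic}, $\Lambda^{\N}_{\phi}(\bar{b}\cup\set{d})\subseteq X$. The first part of Lemma \ref{FinLambdaIsRE} then says $\Lambda^{\N}_{\phi}(\bar{b}\cup\set{d})$ is $\Sigma^0_1$, and it is infinite by assumption, so it serves as the required $\Sigma^0_1$ subset of $X$.

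In the second case, for every $d\in X$ the closure $\Lambda^{\N}_{\phi}(\bar{b}\cup\set{d})$ is finite. I would then consider the set
\[
A=\setcol{d\in\N}{\Lambda^{\N}_{\phi}(\bar{b}\cup\set{d})\text{ is finite}},
\]
which is $\Sigma^0_1$ by the second part of Lemma \ref{FinLambdaIsRE} (applied uniformly in $d$, with $\bar{b}$ as fixed parameters). The claim is that $A=X$: the inclusion $X\subseteq A$ is immediate from the case hypothesis, and conversely if $d\in\N\setminus X=\N\setminus\acl(\bar{b})$ then the $n+1$ elements $\bar{b},d$ are independent, so Lemma \ref{lemma: infinite PPS over indepednent n+1-tuple} forces $\Lambda^{\N}_{\phi}(\bar{b}\cup\set{d})$ to be infinite and hence $d\notin A$. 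Since $\dim(\N)\ge n+1$ such $d$ exist, so $A=X$, which is infinite and $\Sigma^0_1$.

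There is no serious technical obstacle; the key observation is that Lemma \ref{FinLambdaIsRE} plays a dual role. On one hand, ``finite $\phi$-closure of $\bar{b}\cup\set{d}$'' is a $\Sigma^0_1$ certificate that $d$ lies in $X$, by the contrapositive of Lemma \ref{lemma: infinite PPS over indepednent n+1-tuple}. On the other hand, if some $d\in X$ has infinite $\phi$-closure then that closure itself is the required $\Sigma^0_1$ witness. The dichotomy is arranged so that exactly one of these two constructions succeeds in producing an infinite $\Sigma^0_1$ subset of $X$.
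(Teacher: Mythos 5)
Your proof is correct and follows essentially the same route as the paper: both set up a dichotomy on whether the $\phi$-closure of a finite subset of $X$ is infinite, use Lemma \ref{lemma: infinite PPS over indepednent n+1-tuple} in the contrapositive to show that $d\notin X$ forces $\Lambda^{\N}_{\phi}(\bar{b},d)$ to be infinite, and invoke the two parts of Lemma \ref{FinLambdaIsRE} to get $\Sigma^0_1$-ness. The only cosmetic difference is that the paper's first case allows arbitrary finite $B\subseteq X$ while you restrict to tuples of the form $\bar{b}\cup\{d\}$; both work, and your version is slightly tighter.
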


\begin{proof}
	If there is some finite $B\subseteq X$ with $\Lambda^{\N}_{\varphi}(B)$ infinite, this is the needed infinite   $\Sigma^0_1$ set, so we may assume there is no such $B$.
	
	Fix $\bar{b}$ to be a basis for $X$. By Lemma \ref{lemma: infinite PPS over indepednent n+1-tuple}, $\Lambda_\phi^{\N}(\bar{b},a)$ is infinite for any $a\notin X$. However, for any $a\in X$, $\Lambda_\phi^{\N}(\bar{b},a)$ is finite. Thus Lemma \ref{FinLambdaIsRE} shows that $X$ itself is $\Sigma^0_1$ as it is the set of $a$ so that $\Lambda_\phi^{\N}(\bar{b},a)$ is finite.
\end{proof}

We can now extend our result to the dimension $n$ model as well.

\begin{theorem}\label{thm:downton}
	Suppose that $\N$ is a finite dimensional recursive model of $T$. Then every model of $T$ with dimension $k\in [n,\text{dim}(\N)]$ has a recursive presentation. 
\end{theorem}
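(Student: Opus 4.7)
The plan is to reduce immediately to the single new case $k=n$, since dimensions $k \in [n+1,\dim(\N)]$ are already handled by Theorem \ref{thm:DownTon+1}. If $\dim(\N) = n$, there is nothing further to prove --- $\N$ itself is the $n$-dimensional model. So I assume $\dim(\N) \geq n+1$ and fix a generic $n$-tuple $\bar{b}$ in $\N$. Setting $\M = \acl(\bar{b})$, by Fact \ref{TVFact}, $\M$ is either finite or an elementary submodel of $\N$ of dimension exactly $n$. Finite models are trivially recursive, so the interesting case is when $\M$ is infinite, in which case $\M$ is (isomorphic to) the $n$-dimensional model of $T$.

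With $\M$ infinite, the two ingredients demanded by Lemma \ref{GoingDownTrick} are both already in place. First, $\M = \acl(\bar{b})$ is a $\Delta^0_2$ subset of $\N$ by Lemma \ref{FDClosedSetsAreDelta2}, applied with the generic tuple $\bar{b}$. Second, since $\dim(\N) \geq n+1$ and $\M$ is an infinite algebraically closed subset of $\N$ of dimension $n$, Lemma \ref{DimensionnHaveSigma1subsets} produces an infinite $\Sigma^0_1$ subset of $\M$. Feeding these two facts into Lemma \ref{GoingDownTrick} yields a recursive presentation of $\M$, completing the proof.

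The genuine work has already been absorbed into the earlier lemmas, so no new obstacle arises at this stage. In particular, the key geometric input --- that $\Lambda^{\N}_{\varphi}(\bar{b},a)$ is finite precisely when $a \in \acl(\bar{b})$ and infinite otherwise --- together with the uniform recursiveness of the $\phi$-closure sequence from Lemma \ref{FinLambdaIsRE}, gave the $\Sigma^0_1$ definition of $\M$ itself in Lemma \ref{DimensionnHaveSigma1subsets}. Once that definition is available, extending Theorem \ref{thm:DownTon+1} down one further dimension is immediate.
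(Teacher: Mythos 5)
Your proposal is correct and follows essentially the same route as the paper: reduce to the case $k=n$ via Theorem \ref{thm:DownTon+1}, note $\acl(\bar{b})$ is $\Delta^0_2$ by Lemma \ref{FDClosedSetsAreDelta2}, get an infinite $\Sigma^0_1$ subset from Lemma \ref{DimensionnHaveSigma1subsets}, and conclude with Lemma \ref{GoingDownTrick}. The only quibble is the phrase ``finite models are trivially recursive'': if $\acl(\bar{b})$ is finite it is not a model of $T$ at all, so the $n$-dimensional model simply does not exist and that case is vacuous, which is how the paper (implicitly) and your argument (in effect) both dispose of it.
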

\begin{proof}
	For $k\geq n+1$, this was shown in Theorem \ref{thm:DownTon+1}. For the case of dimension $n$, we again fix $\bar{x}$ a generic $n$-tuple in $\N$ and Lemma \ref{FDClosedSetsAreDelta2} shows that $\acl(\bar{x})$ is $\Delta^0_2$. Since there is assumed to be a model of dimension $n$, $\acl(\bar{x})$ is infinite and Lemma \ref{DimensionnHaveSigma1subsets} shows that it contains an infinite $\Sigma^0_1$ set. Then Lemma \ref{GoingDownTrick} shows that $\acl(\bar{x})$, which is isomorphic to the $n$-dimensional model of $T$, has a recursive presentation.
\end{proof}

\section{Relating circuit size to model dimensions}\label{sec:relatingcircuitsizetomodeldimensions}

Thus far we have shown that for a model complete flat strongly minimal theory $T$, if a finite dimensional model is recursive, then all the models of smaller dimension, down to $n$ (the dimension of the smallest circuit) are also recursively presentable. We now consider how close this gets us to showing that all models of smaller dimension are recursively presentable. To do this, we let $p$ be the dimension of the prime model of $T$ and we will give some relationships between $n$ and $p$. Recall that we have fixed a formula $\phi$ at the beginning of section \ref{sec5} as in Definition \ref{def:phi}.

\begin{lemma}
\label{lemma: p leq n plus 1}
$p\leq n+1$
\end{lemma}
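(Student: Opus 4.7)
The plan is to establish the inequality by directly exhibiting a model of $T$ of dimension $n+1$, since $p$ is by definition the minimum dimension of a (countable) model of $T$.

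First, I would pass to a model $\M\models T$ of dimension at least $n+1$ (for instance the countable saturated model), and choose a set $Y\subseteq\M$ consisting of $n+1$ independent points. By Observation \ref{LambdaIsAlgebraic}, the formula-closure satisfies $\Lambda^{\M}_{\varphi}(Y) \subseteq \acl(Y)$, so $\Lambda^{\M}_\varphi(Y)$ is contained in an $(n+1)$-dimensional algebraically closed set. By Lemma \ref{lemma: infinite PPS over indepednent n+1-tuple}, applied inside $\M$, the set $\Lambda^{\M}_\varphi(Y)$ is infinite. Consequently $\acl(Y)$ is an infinite algebraically closed subset of $\M$.

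Next I would invoke Fact \ref{TVFact}: since $\acl(Y)$ is infinite and algebraically closed inside the strongly minimal structure $\M$, it is an elementary substructure of $\M$, hence itself a model of $T$. Its dimension is exactly $|Y|=n+1$, so there exists a countable model of $T$ of dimension $n+1$. Because the prime model realizes the minimum dimension among all countable models of $T$, this immediately yields $p\leq n+1$, as required.

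The argument is essentially a direct corollary of Lemma \ref{lemma: infinite PPS over indepednent n+1-tuple} combined with Fact \ref{TVFact}, so no step presents a real obstacle; the only thing to keep in mind is that the ping-pong construction of Section \ref{sec:PPS} takes place purely at the level of the pregeometry, which is why it applies equally well inside a possibly non-recursive model such as the saturated one used to witness the prime model's dimension.
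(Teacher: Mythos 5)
Your proof is correct and is essentially the paper's own argument: both combine Lemma \ref{lemma: infinite PPS over indepednent n+1-tuple} with Observation \ref{LambdaIsAlgebraic} to see that the algebraic closure of an independent $(n+1)$-tuple is infinite, and then apply Fact \ref{TVFact} to obtain an $(n+1)$-dimensional model, giving $p\leq n+1$. The only difference is that you spell out the intermediate steps (choosing the ambient model and noting that $\acl(Y)$ is an elementary substructure) that the paper leaves implicit.
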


\begin{proof}
	By Lemma \ref{lemma: infinite PPS over indepednent n+1-tuple} and Observation \ref{LambdaIsAlgebraic}, the closure of a generic $n+1$-tuple is infinite. Thus Fact \ref{TVFact} shows that there is an $n+1$-dimensional model of $T$.
\end{proof}

\begin{observation}
\label{observation: if disintegrated model exists, the closure of every generic point is infinite}
If $\mathcal{N}\models T$ is of dimension less than $n$, then
$G_{\mathcal{N}}$ is disintegrated. In particular, if such a model $\mathcal{N}$ exists, by the pigeonhole principle, the closure of every generic point in a model of $T$ is infinite.
\end{observation}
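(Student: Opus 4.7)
The plan is to prove the main assertion by its contrapositive: if $G_{\mathcal{N}}$ is non-disintegrated, then $\dm(\mathcal{N}) \geq n$. Suppose there exist $b \in \mathcal{N}$ and a finite $A \subseteq \mathcal{N}$ with $b \in \cl(A) \setminus \bigcup_{a \in A} \cl(a)$, chosen with $|A| = k$ minimum; necessarily $k \geq 2$. An exchange argument together with minimality of $A$ shows (i) that $A$ is itself independent (otherwise remove a redundant element of $A$ and obtain a smaller witness, a contradiction) and (ii) that $A' \cup \set{b}$ is independent for every proper $A' \subsetneq A$ (if some $a \in A'$ lay in $\cl((A' \setminus \set{a}) \cup \set{b})$, exchange would force $b \in \cl(A')$, again contradicting minimality). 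Hence $A \cup \set{b}$ is a circuit of size $k + 1 \geq 3$ in $\mathcal{N}$, and since $\mathcal{N}$ is an elementary substructure of the countable saturated model of $T$ and (in)dependence is preserved under $\preceq$, this is also a circuit of size $> 2$ in the pregeometry of $T$. By the choice of $n$ in Definition \ref{def:phi}, every such circuit has size at least $n + 1$, so $k \geq n$, and therefore $\dm(\mathcal{N}) \geq k \geq n$.

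For the ``in particular'' clause, assume a model $\mathcal{N}$ of dimension $d < n$ exists. By the first part $G_{\mathcal{N}}$ is disintegrated, so for any basis $B$ of $\mathcal{N}$ we have $\mathcal{N} = \cl(B) = \bigcup_{b \in B} \cl(b)$. Since $T$ is strongly minimal, $\mathcal{N}$ is infinite, whereas $|B| = d$ is finite, so the pigeonhole principle yields some $b_0 \in B$ with $\cl(b_0)$ infinite. Any basis element is generic in $\mathcal{N}$, and all generic $1$-points in all models of $T$ realize the unique generic $1$-type; transporting $b_0$ to an arbitrary generic point $a$ in any model of $T$ via an elementary map preserves algebraic closure, so $\cl(a)$ is infinite as well.

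The only real obstacle is the bookkeeping around showing that $A \cup \set{b}$ is a circuit --- namely, verifying via exchange that minimality of $A$ simultaneously forces independence of $A$ and of every $A' \cup \set{b}$ with $A' \subsetneq A$. Once that is in hand, the definition of $n$, strong minimality of $T$, and uniqueness of the generic $1$-type deliver both conclusions immediately.
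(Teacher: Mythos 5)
Your proof is correct and is essentially the argument the paper intends (the observation is stated there without proof): a circuit of size greater than $2$ has dimension at least $n$, so a non-disintegrated model must have dimension at least $n$, and then the pigeonhole step plus uniqueness of the generic $1$-type transfers the infiniteness of $\cl(b_0)$ to every generic point in every model. The only loose end is the degenerate case $d=0$, where $B=\emptyset$ and the pigeonhole step has nothing to act on; there $\mathcal{N}=\acl(\emptyset)$ is itself infinite (or pass to the $1$-dimensional model, which exists by Fact \ref{TVFact}, since $n\geq 2$), so the conclusion is immediate.
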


\begin{lemma}\label{lem:pgeqnforlargen}
If $n>3$, then $p\geq n$.
\end{lemma}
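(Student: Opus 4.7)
I argue by contrapositive, assuming $p\leq n-1$ and aiming to derive a violation of flatness using the enhanced disintegration that this assumption provides.

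The starting point is Observation~\ref{observation: if disintegrated model exists, the closure of every generic point is infinite} applied to the prime model, which has dimension $p<n$: the closure of every generic element in every model of $T$ is infinite. By Fact~\ref{TVFact}, $\acl(a)$ for any generic $a$ is an elementary submodel of dimension $1$, so in fact $p\leq 1$. Combining both halves of the observation yields the following strong form of disintegration: for every independent tuple $b_1,\ldots,b_k$ with $k\leq n-1$, the closure $\acl(b_1,\ldots,b_k)$ is an infinite algebraically closed set of dimension strictly less than $n$, hence a $k$-dimensional disintegrated elementary submodel, and therefore $\acl(b_1,\ldots,b_k)=\bigcup_{i=1}^k\acl(b_i)$.

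I fix a minimum-size circuit $c_1,\ldots,c_{n+1}$ in the saturated model. The disintegration above controls all the relevant closures: for any subsets $S_1,\ldots,S_r\subseteq\{1,\ldots,n+1\}$ with each $|S_i|\leq n-1$, every intersection of the form $\bigcap_{i\in I}\acl(\{c_j:j\in S_i\})$ equals $\bigcup_{j\in\bigcap_{i\in I}S_i}\acl(c_j)$, which has dimension $|\bigcap_{i\in I}S_i|$. Consequently, the alternating sum $\Delta(\Sigma)$ for any family $\Sigma$ built from such closures collapses, via inclusion-exclusion on the indices, to the combinatorial quantity $|\bigcup_i S_i|$. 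The plan is then to choose $\Sigma$ --- including a closed set witnessing the non-disintegration of the $n$-dimensional set $\acl(c_1,\ldots,c_n)$, which contains the circuit completion $c_{n+1}$ but is not the union $\bigcup_{i=1}^n\acl(c_i)$ --- so that $\Delta(\Sigma)<\dim(\bigcup\Sigma)$, contradicting flatness.

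The main obstacle is identifying such a family $\Sigma$ explicitly. The hypothesis $n>3$ is essential here: (i) the circuit has size $n+1\geq 5$, providing enough elements to arrange a non-trivial configuration, and (ii) the $(n-1)$-subsets involved have size at least $3$, so the disintegration principle above applies to every intersection that arises. This mirrors, but refines, the four-set configuration in the Example of Section~2 that rules out groups: in our setting, the role of the group product is played by the circuit completion, whose presence inside $\acl(c_1,\ldots,c_n)\setminus\bigcup_i\acl(c_i)$ is the only source of non-disintegrated structure and is exactly what must be exploited to push $\Delta(\Sigma)$ strictly below $\dim(\bigcup\Sigma)$.
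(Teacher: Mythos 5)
There is a genuine gap: what you have written is a plan, not a proof. Your preliminary reductions are fine --- if $p<n$ then every algebraically closed set of dimension $\leq n-1$ is disintegrated with infinite point closures, so $\acl(b_1,\dots,b_k)=\bigcup_i\acl(b_i)$ for an independent tuple with $k\leq n-1$, and the intersection computations you describe for small subsets of a circuit are correct. But the decisive step, exhibiting a family $\Sigma$ with $\Delta(\Sigma)<\dm\left(\bigcup\Sigma\right)$, is exactly what you concede you cannot identify, and nothing in the sketch indicates how it could be done: the minimal circuit $c_1,\dots,c_{n+1}$ is a generic configuration in the saturated model and never interacts with the disintegrated low-dimensional closures, while the one set where disintegration fails, $\acl(c_1,\dots,c_n)$, is precisely the set your collapse formula does not cover. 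More fundamentally, a purely pregeometric route of this shape is doubtful: there is no geometric contradiction in having disintegrated closed sets of dimension $<n$ sitting inside a flat geometry whose smallest circuit has dimension $n$, so some model-theoretic input beyond the pregeometry must be used to turn $p<n$ into a contradiction. Your stated role for the hypothesis $n>3$ is likewise not attached to any actual step.

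For comparison, the paper's proof of this lemma uses no flatness at all; flatness only becomes essential in the separate $n=3$ case (Lemma \ref{lem:pgeqnforn3}). One works inside the $(n-1)$-dimensional model with basis $b_1,\dots,b_{n-1}$ and uses the formula $\varphi$ itself: since $\exists x\,\varphi(b_1,\dots,b_{n-1},y,x)$ holds of every $y$ generic over the basis, it holds on a cofinite set, which must meet the infinite set $\acl(b_{n-1})$; choosing such a $c\in\acl(b_{n-1})$ and then $d\in\varphi(b_1,\dots,b_{n-1},c,\M_{n-1})$, disintegration places $d$ in $\acl(b_i)$ for at most one $i$, and $n>3$ provides an index $i'\neq n-1$ (avoiding that $i$) with $c,d\in\acl(b_1,\dots,b_{i'-1},b_{i'+1},\dots,b_{n-1})$; finally, the coordinate-algebraicity of $\varphi$ forces $b_{i'}$ into the algebraic closure of the remaining basis elements, contradicting independence. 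The two ingredients your sketch is missing are precisely these: using strong minimality and the infinitude of point closures to force an instance of $\varphi$ with parameters inside the small closures, and using the algebraicity of $\varphi$ in each coordinate to convert that instance into a dependence among basis elements.
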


\begin{proof}
Assume to the contrary that $n>3$, but $p<n$. Let $\mathcal{M}_{n-1}$ be the model of $T$ of dimension $n-1$, which exists by Fact \ref{TVFact}, and let $b_1,\dots,b_{n-1}$ be a basis for $\M_{n-1}$. By Observation \ref{observation: if disintegrated model exists, the closure of every generic point is infinite}, the pregeometry $G_{\mathcal{M}_{n-1}}$ is disintegrated, and the closure of every point in $\mathcal{M}_{n-1}$ is infinite. In particular, in $\acl(b_{n-1})$ there is a generic enough point $c$ such that
\[
\mathcal{M}_{n-1}\models \exists x \varphi(b_1,\dots,b_{n-1},c,x).
\]
Choose such a $c$, and $d\in \varphi(b_1,\dots, b_{n-1},c,\mathcal{M}_{n-1})$. Since $d\in\acl(b_1,\dots, b_{n-1},c) = \acl(b_1,\dots, b_{n-1})$, by disintegration and independence of $b_1,\dots, b_{n-1}$, either $d\in\acl(\emptyset)$ or there is a unique $i$ such that $d\in\acl(b_i)$. In particular, because $n>3$, there is some $i'\neq n-1$ such that $d\in\acl(b_1,\dots,b_{i'-1},b_{i'+1},\dots, b_{n-1})$. Without loss of generality, $i'=1$.

By choice of $\varphi$ in Definition \ref{def:phi}, we know $\varphi(\mathcal{M}_{n-1}, b_2,\dots, b_{n-1},c,d)$ is finite, so $b_1$ is not generic over $b_2,\dots, b_{n-1},c,d$. But $c,d\in\acl(b_2,\dots,b_{n-1})$, so $b_1\in \acl(b_2,\dots, b_{n-1})$, in contradiction to $b_1,\dots, b_{n-1}$ being a basis.
\end{proof}

\begin{remark}
The proof of Lemma \ref{lem:pgeqnforlargen} can be modified slightly to show that if $n=3$, then $p\geq 1$. Observe that if $p=0$, then $c$ can be chosen from $\acl(\emptyset)$. In that case, the proof goes through also for $i'=n-1$, because unlike before, $c\in\acl(b_1,\dots,b_{i'-1},b_{i'+1},\dots, b_{n-1})$ regardless of choice of $i'$. Thus, we only need $n\geq 3$ to get a contradiction, implying that if $n=3$, then $p> 0$.
\end{remark}

We now extend Lemma \ref{lem:pgeqnforlargen} to the case $n=3$. Note that in this Lemma, we use flatness in an essential way. We do not know if the result holds without the assumption of flatness.

\begin{lemma}\label{lem:pgeqnforn3}
	If $n=3$, then $p\geq n$.
\end{lemma}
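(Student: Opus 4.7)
The plan is to rule out the cases $p = 0$, $p = 1$, and $p = 2$, which together with Lemma \ref{lemma: p leq n plus 1} (giving $p \leq n + 1 = 4$) establishes $p \geq 3$. The case $p = 0$ is handled by the remark following Lemma \ref{lem:pgeqnforlargen}.

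The case $p = 2$ can be ruled out without flatness. If $p = 2$, then the prime model has dimension $2 < n$, so by Observation \ref{observation: if disintegrated model exists, the closure of every generic point is infinite} its pregeometry is disintegrated and the closure of every generic point in any model of $T$ is infinite. For a generic $b$ in the prime model, $\acl(b)$ is then infinite and algebraically closed, so by Fact \ref{TVFact} it is an elementary substructure of dimension $1$, contradicting the primality of the dimension-$2$ model.

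The essential case, where flatness must be used, is $p = 1$. I would assume $p = 1$ for contradiction. Then the models of dimension $1$ and $2$ exist and are disintegrated, $\acl(\emptyset)$ is finite, and $\acl(b)$ is infinite for every generic $b$. Fix a basis $b_1, b_2, b_3$ of the dimension-$3$ model $\M$ and a $4$-circuit witness $d \in \M$ with $\varphi(b_1, b_2, b_3, d)$; such $d$ exists by homogeneity since $\varphi$ witnesses $4$-circuits in $\M_\omega$. The plan is to combine the disintegrated structure of the dimension-$2$ submodels $\acl(b_i, b_j) = \acl(b_i) \cup \acl(b_j)$ with the non-disintegrated $4$-circuit in $\M$ to produce a violation of flatness.

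The main obstacle is the very reason Lemma \ref{lem:pgeqnforlargen}'s direct dimension-counting argument fails when $n = 3$: in the dimension-$2$ submodel $\acl(b_1, b_2)$, any $\varphi$-instance $\varphi(b_1, b_2, c, d')$ with $c \in \acl(b_2) \setminus \acl(\emptyset)$ is forced by disintegration to have $d' \in \acl(b_1) \cup \acl(\emptyset)$, and the desired $b_1 \in \acl(b_2, c, d')$-style contradiction does not materialize. To circumvent this, for each generic $c \in \acl(b_2)$ the triple $(b_1, c, b_3)$ is independent (since $\acl(b_2) \cap \acl(b_1, b_3) = \acl(\emptyset)$ by disintegration of the dimension-$2$ submodel $\acl(b_1, b_3)$), so by homogeneity there is a $4$-circuit witness $d'(c)$ with $\varphi(b_1, c, b_3, d'(c))$; by strong minimality, $c \mapsto d'(c)$ is a definable finite-to-finite map whose image is a cofinite subset of $\acl(d)$. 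The final step — which I expect to be the main computational challenge — is to apply flatness to a carefully chosen collection $\Sigma$ of closed sets (built from the lines $\acl(b_i), \acl(d)$ and the dimension-$2$ sets $\acl(b_i, b_j), \acl(b_i, d)$): the disintegrated intersections collapse terms of $\Delta(\Sigma)$ while the $4$-circuit forces $\dim(\bigcup \Sigma) = 3$, and verifying the strict inequality $\Delta(\Sigma) < \dim(\bigcup \Sigma)$ would contradict flatness.
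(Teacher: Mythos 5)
Your reduction to the cases $p=0,1,2$ is fine, and two of the three cases are handled correctly: $p=0$ by the remark after Lemma~\ref{lem:pgeqnforlargen}, and $p=2$ by your observation that a generic $b$ in the prime model gives an infinite algebraically closed set $\acl(b)$, hence by Fact~\ref{TVFact} a $1$-dimensional elementary submodel. But the case $p=1$ (equivalently, the existence of a $2$-dimensional model) is exactly the content of the lemma, and there your argument has a genuine gap, in two respects. First, the claim that the $\varphi$-witnesses $d'(c)$ over the independent triples $(b_1,c,b_3)$, $c\in\acl(b_2)$, form a cofinite subset of $\acl(d)$ is unjustified: such a witness lies in $\acl(b_1,b_2,b_3)$ and forms a circuit with $(b_1,c,b_3)$, but nothing ties it to the particular line $\acl(d)$. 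Second, and more fundamentally, the deferred ``main computational challenge'' cannot be completed as designed: every set you propose to put into $\Sigma$ (the lines $\acl(b_i),\acl(d)$ and the planes $\acl(b_ib_j),\acl(b_id)$) is, by disintegration of the $2$-dimensional closed sets, a union of at most two of the four lines, and any two distinct lines meet in $\acl(\emptyset)$. Hence all the intersections occurring in $\Delta(\Sigma)$ have exactly the dimensions they would have if the four lines were independent, and a direct check gives $\Delta(\Sigma)\geq$ the number of lines met by $\bigcup\Sigma$, which is $\geq\dm(\bigcup\Sigma)$. The circuit $\{b_1,b_2,b_3,d\}$ only \emph{lowers} $\dm(\bigcup\Sigma)$ (from $4$ to $3$), which makes the flatness inequality easier to satisfy, not harder; no collection built from these disintegrated sets can violate flatness.

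The paper's proof supplies the missing idea in a different way: it rules out the $2$-dimensional model outright (subsuming all three of your cases) using the composite formula $\psi(x_1,\dots,x_6)=\varphi(x_1,x_2,x_5,x_6)\wedge\varphi(x_3,x_4,x_5,x_6)$, two $\varphi$-circuits glued along their last two coordinates. Flatness is applied not to lines and planes in the $3$-dimensional model but to the three $3$-dimensional closed sets $\acl(a,b,c,d)$, $\acl(a,b,e,f)$, $\acl(c,d,e,f)$ of a six-point configuration, to show that $\exists x_5x_6\,\psi$ holds of \emph{every} independent $4$-tuple. One then uses strong minimality to replace the third and fourth arguments by elements $a'',a'\in\acl(a)$ and to find witnesses $t_0,t_1\in\acl(a,b)=\acl(a)\cup\acl(b)$; since $\varphi$ makes each coordinate algebraic over the others, $t_i\in\acl(a)$ would force $b\in\acl(a)$, while $t_0,t_1\in\acl(b)$ forces $a\in\acl(b)$, a contradiction either way. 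So the role of flatness is to prove a genericity statement about $\psi$, and the contradiction with the existence of the $2$-dimensional model comes from definability together with disintegration, not from a direct flatness violation of the kind you were aiming for.
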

\begin{proof}
	We fix $n=3$.
	Assume, towards a condradiction, that the model of dimension 2 exists. The type of an independent tuple in a strongly minimal theory is not dependent on the dimension of the model, so by Observation \ref{observation: if disintegrated model exists, the closure of every generic point is infinite}, the closure of every independent point is infinite, and $\acl(a,b) = {\acl(a)\cup \acl(b)}$ for any independent pair $a,b$.
	
	Let $\psi(x_1,\dots,x_6) := \varphi(x_1,x_2,x_5,x_6)\land \varphi(x_3,x_4,x_5,x_6)$.
	
	\smallskip
	\sloppy
	\noindent\textbf{Claim 1}.
	If $\set{a,b,c,d}$ is independent in some $\M\models T$, then $\mathcal{M}\models \exists x_5,x_6 \psi(a,b,c,d,x_5,x_6)$.
	
	\begin{proof}
		By homogeneity, whenever $x,y,z$ are independent there exists $w\in\varphi(\mathcal{M},x,y,z)$ such that $\set{w,x,y,z}$ is a circuit. Take $d,e,f$ independent, and let $c$ be such that $\mathcal{M}\models \varphi(c,d,e,f)$ and $\set{c,d,e,f}$ is a circuit. Now take $b$ independent from $c,d,e,f$ and let $a$ be such that $\mathcal{M}\models \varphi(a,b,e,f)$ and $\set{a,b,e,f}$ is a circuit.
		
		We claim that $\set{a,b,c,d}$ are independent. Assume not, i.e., $a\in\acl(b,c,d)$. By $n=3$, either $a$ is interalgebraic with a single element from $\set{b,c,d}$, or $\set{a,b,c,d}$ is a circuit. By construction, $a\notin \acl(b)$. If $a\in \acl(c,d)\subseteq \acl(c,d,e,f)$, then $b\in\acl(a,e,f)\subseteq\acl(c,d,e,f)$, so this is also not the case. Thus $\set{a,b,c,d}$ is a circuit. Denote $E_1 =\acl(a,b,c,d)$, $E_2 = \acl(a,b,e,f)$, $E_3 = \acl(c,d,e,f)$. The sets $E_1,E_2,E_3$ are distinct -- the union of any two is of dimension $d(a,b,c,d,e,f) = 4$, whereas $\dm(E_i)=3$ for each $i\in\set{1,2,3}$. Thus, $\dm(E_i\cap E_j) = 2$ for any $i,j$ distinct. As for $\dm(E_1\cap E_2\cap E_3)$, note $E_1\cap E_2\cap E_3 = (E_1\cap E_3)\cap (E_2\cap E_3) = \acl(c,d)\cap \acl(e,f) = (\acl(c)\cup\cl(d))\cap (\acl(e)\cup \acl(f)) = \acl(\emptyset)$. So $\dm(E_1\cap E_2\cap E_3) = 0$.
		
		Executing a flatness calculation for $\Sigma = \set{E_1,E_2,E_3}$ yields
		\[
		\dm(abcdef)\leq\Delta(\Sigma)= 3\cdot 3-3\cdot 2 + 0 = 3
		\]
		which contradicts independence of $\set{b,d,e,f}$. Thus, $a,b,c,d$ are independent.
		
		As $\M\models \exists x_5,x_6 \psi(a,b,c,d,x_5,x_6)$ and the type of an independent 4-tuple in a model of $T$ is unique, this proves the claim.
	\end{proof}
	
	Fix $a,b,c$ independent in some $\mathcal{M}\models T$. By Claim 1, $\exists x_5,x_6 \psi(a,b,c,\mathcal{M},x_5,x_6)$ is co-finite, so intersects $\acl(a)$. Let $a'\in\acl(a)$ be such that $\mathcal{M}\models \exists x_5,x_6 \psi(a,b,c,a',x_5,x_6)$. Now $\exists x_5,x_6 \psi(a,b,\mathcal{M}, a',x_5,x_6)$ is co-finite, because $c$ is independent of $a,b$. In particular, we may take $a''\in\acl(a)$ such that $\mathcal{M}\models \exists x_5,x_6 \psi(a,b,a'',a',x_5,x_6)$.
	
	\smallskip
	\noindent\textbf{Claim 2}. There are $t_0,t_1\in\acl(a,b)$ such that $\mathcal{M}\models \psi(a,b,a'',a',t_0,t_1)$.
	
	\begin{proof}
		If there are only finitely many $t_0$ satisfying $\exists x_6 \psi(a,b,a'',a',t_0,x_6)$, then any such $t_0$ is in $\acl(a,b)$. Otherwise, by strong minimality, there are co-finitely many such $t_0$ and we may choose $t_0\in \acl(a,b)$ satisfying $\exists x_6 \psi(a,b,a'',a',t_0,x_6)$. With such a $t_0$, $\psi(a,b,a'',a',t_0,\mathcal{M})\subseteq\phi(a,b,t_0,\M)\subseteq \acl(a,b)$. So, for any $t_1$ satisfying $\psi(a,b,a'',a',t_0,t_1)$, we have $t_0,t_1\in \acl(a,b)$.
	\end{proof}
	
	Take $t_0,t_1$ such as in Claim 2. Assume for a moment $t_i\in \acl(a)$ for some $i\in\set{0,1}$. Then $\varphi(a'',a',t_0,t_1)$ implies $t_{1-i}\in \acl(a)$. Now $\varphi(a,b,t_0,t_1)$ implies $b\in \acl(a)$, in contradiction. Therefore, it must be that $t_0,t_1\notin\acl(a)$, i.e. $t_0,t_1\in\acl(b)$. But now $a\in\acl(b,t_0,t_1)=\acl(b)$, a contradiction.
	
	We have thus shown that there is no $2$-dimensional model. It follows from Fact \ref{TVFact} that $p\geq 3=n$.
\end{proof}

\section{Recursion Theoretic Consequences}

We now restate Theorem \ref{thm:downton} in the case $n\neq 2$:

\begin{theorem}\label{thm:For n not equal to 2}
	Let $T$ be a model complete flat strongly minimal theory. Suppose further that the smallest circuit in a saturated model of $T$ has size $\geq 4$ (i.e. $n\geq 3$). Suppose that $\N$ is a recursive model of finite dimension $k$. Then any model of $T$ of dimension $\leq k$ has a recursive presentation. 
\end{theorem}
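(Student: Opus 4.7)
The plan is to observe that this theorem is essentially immediate from the results already proved. I would combine Theorem \ref{thm:downton} with the lower bounds on $p$ obtained in Lemmas \ref{lem:pgeqnforlargen} and \ref{lem:pgeqnforn3}. The key insight is that when $n \geq 3$, the prime model dimension $p$ is at least $n$, so the range of dimensions covered by Theorem \ref{thm:downton} actually encompasses all models of $T$ of dimension $\leq k$.

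More concretely, first I would invoke Lemma \ref{lem:pgeqnforlargen} (in the case $n > 3$) and Lemma \ref{lem:pgeqnforn3} (in the case $n = 3$) to conclude $p \geq n$. Since every model of $T$ has dimension at least $p$, every countable model of $T$ has dimension $\geq n$. In particular, if $\N$ is a model of dimension $k$, then $k \geq p \geq n$, and every model of dimension $\leq k$ has dimension in the interval $[p, k] \subseteq [n, k]$. Theorem \ref{thm:downton} then applies directly and yields a recursive presentation of every such model.

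There is no real obstacle here; the content of the theorem lies entirely in the preceding results. The only point worth being explicit about is the case split on whether $n = 3$ or $n > 3$, since these invoke different lemmas, and noting that the hypothesis $n \geq 3$ is exactly what is needed to ensure that $p \geq n$ so that Theorem \ref{thm:downton}'s conclusion covers every model of $T$.
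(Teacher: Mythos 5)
Your proposal is correct and is essentially identical to the paper's own proof: it cites Lemmas \ref{lem:pgeqnforlargen} and \ref{lem:pgeqnforn3} to conclude that no model of $T$ has dimension below $n$, and then applies Theorem \ref{thm:downton} to cover all dimensions in $[n,k]$.
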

\begin{proof}
	Theorem \ref{thm:downton} tells us that any model of dimension $\leq k $ has a recursive presentation, given that its dimension is $\geq n$. But Lemmas \ref{lem:pgeqnforlargen} and \ref{lem:pgeqnforn3} show that there are no models of dimension $<n$.
\end{proof}

We now focus on the remaining case of $n=2$. It follows from Theorem \ref{thm:downton} that if any model of finite dimension $\geq 2$ is recursive, then the $2$-dimensional model is recursive. Yet it is possible that the $0$- and $1$-dimensional models exist. Our methods which exploit the pregeometry to find elements in the closure of a tuple are ill equipped for this case. In particular, for the target models, the pregeometry is disintegrated! 

In this case, we are not able to use the assumption of a recursive 2-dimensional model to prove that the $0$- and $1$-dimensional models have recursive presentations, but we are able to do so assuming the existence of a recursive $3$-dimensional model.

Recall that we have fixed a formula $\phi$ at the beginning of section \ref{sec5} as in Definition \ref{def:phi}.

\begin{definition}
	We define the Infinite-$\Lambda$-Dimension ($\ILD$) of $T$ to be the smallest dimension of a tuple $\bar{x}$ in a model of $T$ so that $\Lambda_\phi(\bar{x})$ is infinite.
\end{definition}

\begin{lemma}\label{closure of ILD-1}
	Let $\N$ be a recursive model of $T$ of dimension $\geq \ILD$, and let $\bar{y}$ be an independent $(\ILD-1)$-tuple in $\N$. Then $\acl(\bar{y})$ is a $\Sigma^0_1$-subset of $\N$.
\end{lemma}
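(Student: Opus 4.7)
The plan is to adapt the strategy of Lemma \ref{DimensionnHaveSigma1subsets} to the setting of the infinite-$\Lambda$-dimension. I will prove the equality
\[
\acl(\bar{y}) = \{a \in \N : \Lambda_\phi^\N(\bar{y}, a) \text{ is finite}\}
\]
and then invoke Lemma \ref{FinLambdaIsRE} to conclude that $\acl(\bar{y})$ is $\Sigma^0_1$: uniformly in $a$ one enumerates the stages $(\bar{y}, a)^i_\phi$ and waits for $(\bar{y}, a)^i_\phi = (\bar{y}, a)^{i+1}_\phi$, which by that lemma happens exactly when $\Lambda_\phi^\N(\bar{y}, a)$ is finite.

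For the forward ($\subseteq$) inclusion, if $a \in \acl(\bar{y})$, then the set of coordinates of $(\bar{y}, a)$ has dimension $\ILD - 1$, strictly less than $\ILD$. By the definition of $\ILD$ no tuple of dimension $<\ILD$ has infinite $\Lambda_\phi$-closure, so $\Lambda_\phi^\N(\bar{y}, a)$ is finite.

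For the backward ($\supseteq$) inclusion, if $a \notin \acl(\bar{y})$, then $(\bar{y}, a)$ is an independent $\ILD$-tuple in $\N$, and I need to show $\Lambda_\phi^\N(\bar{y}, a)$ is infinite. By Observation \ref{LambdaIsAlgebraic} the $\Lambda_\phi$-iteration produces only algebraic elements, and by Fact \ref{TVFact} the set $\acl(\bar{y}, a)$ is an elementary substructure of $\N$, so the iteration computed in $\N$ coincides with the iteration computed inside $\acl(\bar{y}, a)$. Strong minimality of $T$ makes the type of an independent $\ILD$-tuple unique, hence $|\Lambda_\phi^\N(\bar{y}, a)|$ equals the common value obtained for any independent $\ILD$-tuple in any model of $T$ of dimension $\geq \ILD$. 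The definition of $\ILD$ supplies a dimension-$\ILD$ witness with infinite $\Lambda_\phi$-closure; combined with the invariance just noted, this forces the common value to be infinite.

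The hard step is the last sentence. Interpreted literally, the definition of $\ILD$ only guarantees \emph{some} (possibly non-independent) dimension-$\ILD$ witness, and in principle the $\Lambda_\phi$-closure of such a tuple may strictly exceed that of its independent basis. I expect to handle this either by the intended reading of ``dimension of a tuple'' (where witnesses of $\ILD$ are taken with length equal to dimension, so independent) or by a short supplementary observation: choose a witness $\bar{x}$ of $\ILD$ with minimal length, and use strong minimality together with the uniqueness of the $\ILD$-dimensional model to transport infiniteness of $\Lambda_\phi(\bar{x})$ to infiniteness of $\Lambda_\phi$ on an independent $\ILD$-tuple.
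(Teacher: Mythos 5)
You have correctly identified the gap yourself, and it is real: nothing in Definition \emph{Infinite-$\Lambda$-Dimension} forces the $\ILD$-dimensional witness $\bar{x}$ to be an \emph{independent} tuple, so your claimed equality $\acl(\bar{y}) = \setcol{a}{\Lambda_\phi^\N(\bar{y},a)\text{ finite}}$ is not established in the backward direction. Neither of your proposed repairs closes it. The ``intended reading'' you suggest (length equals dimension) is not what the paper means and is indeed incompatible with its proof, which explicitly partitions $\bar{x}$ into a basis $\bar{x}_0$ and an algebraic remainder $\bar{x}_1$. The ``minimal-length witness'' idea also fails: the shortest $\ILD$-dimensional tuple with infinite $\Lambda_\phi$-closure may still properly contain algebraic elements, and $\Lambda_\phi$ applied to its basis alone need not ever re-generate those elements (the iteration of $\phi$ from a strictly smaller seed can plateau early). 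Uniqueness of the $\ILD$-dimensional model tells you that $\acl(\bar{y}a)\cong\acl(\bar{x}_0)$, but $\Lambda_\phi(\bar{y}a)$ is only a subset of that closure, and nothing in type-invariance makes it catch up to $\Lambda_\phi(\bar{x})$.

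The paper's actual proof resolves exactly this by replacing the set you compute. It chooses a formula $\psi(\bar{u},\bar{v})$ that isolates $\text{tp}(\bar{x}_1/\bar{x}_0)$, is everywhere witnessed, and is everywhere algebraic, and then tests, for each candidate $z$, whether $\Lambda_\phi(\bar{y},z,\bar{w})$ is finite for some $\bar{w}$ with $\psi(\bar{y}z,\bar{w})$ --- not whether $\Lambda_\phi(\bar{y},z)$ is finite. This is a substantive change: when $z\notin\acl(\bar{y})$ the tuple $\bar{y}z\bar{w}$ has the \emph{same type} as $\bar{x}$ (not just the same dimension), so infiniteness of $\Lambda_\phi$ genuinely transports; when $z\in\acl(\bar{y})$ the algebraicity of $\psi$ keeps the dimension at $\ILD-1$ so finiteness follows from minimality of $\ILD$. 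The everywhere-witnessed condition is what makes the enumeration well-defined on all of $\N$, and Lemma \ref{FinLambdaIsRE} together with model completeness (so $\psi$ is recursive) gives $\Sigma^0_1$. In short: the missing ingredient is the auxiliary formula $\psi$ recovering the algebraic coordinates of the witness, and without something playing that role your backward inclusion is unproved.
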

\begin{proof}
	Let $\bar{x}$ be a tuple in $\N$ of dimension $\ILD$ so that $\Lambda_\phi(\bar{x})$ is infinite. Partition $\bar{x}$ as $\bar{x}_0\cup \bar{x}_1$ where $\bar{x}_0$ is a basis for $\bar{x}$ and reorder $\bar{x}$ so $\bar{x}=\bar{x}_0\bar{x}_1$. 
	
	We choose a formula $\psi(\bar{u},\bar{v})$ so that
	
	\begin{enumerate}
		\item \label{psi isolates}$\psi(\bar{x}_0,\bar{x}_1)$ isolates the type of $\bar{x}_1$ over $\bar{x}_0$, 
		\item \label{psi is witnessed}$T\models \forall \bar{z} \exists \bar{w} \psi(\bar{z},\bar{w})$,
		\item \label{psi is algebraic} $T\models \forall \bar{z} \exists^{<\infty} \bar{w} \psi(\bar{z},\bar{w})$.
	\end{enumerate} 

We now verify that such a $\psi$ exists. Since $\bar{x}_1$ is algebraic over $\bar{x}_0$, its type is isolated, so we can choose a formula $\psi_0$ isolating its type. Let $M$ be the number such that $\N\models \exists^{=M}\bar{v} \psi_0(\bar{x}_0,\bar{v})$. We can then let $\psi(\bar{u},\bar{v}) = \left(\exists^{=M}\bar{w} \psi_0(\bar{u},\bar{w})\wedge \psi_0(\bar{u},\bar{v})\right)
\vee \left( \neg \exists^{=M}\bar{w} \psi_0(\bar{u},\bar{w}) \wedge \bigwedge_{i<\vert \bar{v}\vert}v_i=u_0\right)$.

\begin{claim}
	$z\in \acl(\bar{y})$ if and only if $\Lambda_\phi(\bar{y},z,\bar{w})$ is finite for some, equivalently for any, $\bar{w}$ so that $\N\models \psi(\bar{y}z,\bar{w})$
\end{claim}
\begin{proof}
	Suppose $z\in \acl(\bar{y})$. Then for any $\bar{w}$ so that $\psi(\bar{y}z,\bar{w})$, the dimension of $\bar{y}z\bar{w}$ is $\ILD-1$ by (\ref{psi is algebraic}). Thus $\Lambda_\phi(\bar{y},z,\bar{w})$ is finite by the minimality in the definition of $\ILD$.
	
	Suppose $z\notin \acl(\bar{y})$. Then the type of $\bar{y}z$ is the same as the type of $\bar{x}_0$. Then for any $\bar{w}$ so that $\psi(\bar{y}z,\bar{w})$, the type of $\bar{y}z\bar{w}$ is the same as the type of $\bar{x}$ by (\ref{psi isolates}). Thus $\Lambda_\phi(\bar{y},z,\bar{w})$ is infinite.
\end{proof}

We can use this to enumerate $\acl(\bar{y})$. We enumerate $z$ into $\acl(\bar{y})$ if we find some tuple $\bar{w}$ so that $\N\models \psi(\bar{y}z,\bar{w})$ and we see that $\Lambda_\phi(\bar{y},z,\bar{w})$ is finite. By (\ref{psi is witnessed}), and the claim, this is precisely $\acl(\bar{y})$. Since model completeness implies that $\psi$ defines a recursive subset of $\N^{\vert \bar{x} \vert}$, Lemma \ref{FinLambdaIsRE} shows that this is a recursive enumeration of $\acl(\bar{y})$.
\end{proof}

\begin{lemma}\label{Rec Below ILD}
	If $T$ has a recursive model of dimension $\geq \ILD$, then every model of $T$ of dimension $< \ILD$ has a recursive presentation.
\end{lemma}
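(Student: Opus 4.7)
The plan is to reduce, for each $k < \ILD$ for which the $k$-dimensional model of $T$ exists, to an application of Lemma \ref{GoingDownTrick}. Fix such a $k$ and let $\N$ be the given recursive model of dimension $\geq \ILD$. Choose an independent $k$-tuple $\bar{y}$ in $\N$, which is possible since $\dim(\N) \geq \ILD > k$. Then $\acl(\bar{y})$ is a copy of the $k$-dimensional model, $\acl(\bar{y}) \preceq \N$ by Fact \ref{TVFact}, and Lemma \ref{FDClosedSetsAreDelta2} gives that $\acl(\bar{y})$ is $\Delta^0_2$ in $\N$. So it remains to produce an infinite $\Sigma^0_1$ subset of $\acl(\bar{y})$; in fact, I aim to show $\acl(\bar{y})$ itself is $\Sigma^0_1$ in $\N$.

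The key step is a carousel argument combining Lemma \ref{closure of ILD-1} with Fact \ref{fact:Carousel}. Set $n = \ILD - k$ and extend $\bar{y}$ to an independent $\ILD$-tuple $\bar{y}, b_1, \ldots, b_n$ inside $\N$. For each $i$, the tuple $\bar{c}_i = \bar{y}, b_1, \ldots, b_{i-1}, b_{i+1}, \ldots, b_n$ is an independent $(\ILD-1)$-tuple, so Lemma \ref{closure of ILD-1} tells us $\acl(\bar{c}_i)$ is $\Sigma^0_1$ in $\N$. Since the intersection of finitely many $\Sigma^0_1$ sets is $\Sigma^0_1$, the set $\bigcap_{i=1}^n \acl(\bar{c}_i)$ is $\Sigma^0_1$. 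By Fact \ref{fact:Carousel}, this intersection is exactly $\acl(\bar{y})$. Thus $\acl(\bar{y})$ is an infinite $\Sigma^0_1$ subset of itself, and Lemma \ref{GoingDownTrick} produces a recursive copy.

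The main obstacle to overcome is precisely why Fact \ref{fact:Carousel} is needed at all: by the very definition of $\ILD$, the $\Lambda_\phi$-closure of any tuple of dimension $< \ILD$ is finite, so the strategy that worked in Section \ref{sec5} (use $\Lambda_\phi(\bar{y})$ as the witnessing $\Sigma^0_1$ set) cannot apply to dimensions below $\ILD - 1$. Lemma \ref{closure of ILD-1} provides a $\Sigma^0_1$-description of $\acl$ only at dimension $\ILD - 1$, so for smaller $k$ we must embed $\bar{y}$ into a larger generic configuration and recover $\acl(\bar{y})$ as an intersection. In the boundary case $k = \ILD - 1$ we have $n = 1$ and the carousel degenerates to a direct application of Lemma \ref{closure of ILD-1}; the content of the argument lies in the cases $k < \ILD - 1$.
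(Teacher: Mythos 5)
Your proof follows essentially the same route as the paper's: extend $\bar{y}$ to an independent $\ILD$-tuple, apply Lemma \ref{closure of ILD-1} to each $(\ILD-1)$-subtuple obtained by dropping one of the added points, intersect, and use Fact \ref{fact:Carousel} to identify the intersection with $\acl(\bar{y})$, which is therefore $\Sigma^0_1$ and yields a recursive copy of the $k$-dimensional model. The only divergence is at the final step: the paper concludes directly from $\Sigma^0_1$-ness, whereas you route through Lemmas \ref{FDClosedSetsAreDelta2} and \ref{GoingDownTrick}; be aware that Lemma \ref{FDClosedSetsAreDelta2} assumes $\N$ is finite-dimensional, while the statement here (and its application in Theorem \ref{No Weirdness With omega}) permits $\N$ to be the $\omega$-dimensional model. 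This is harmless, since the $\Sigma^0_1$-ness of $\acl(\bar{y})$ that you establish already supplies the $\Delta^0_2$ hypothesis of Lemma \ref{GoingDownTrick}, so the appeal to Lemma \ref{FDClosedSetsAreDelta2} can simply be dropped.
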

\begin{proof}
	Let $\N$ be a recursive model of $T$ of dimension $\geq \ILD$ and fix $m<\ILD$ so that $T$ has an $m$-dimensional model.
	 Further fix a generic tuple $a_1,\ldots a_\ILD$ in $\N$. For each $i\in [m+1,\ILD]$, let $\bar{c}_i=a_1,\ldots a_{i-1},a_{i+1},\ldots a_{\ILD}$. By Lemma \ref{closure of ILD-1}, each $\acl(\bar{c}_i)$ is a $\Sigma^0_1$ subset of $\N$, thus $\bigcap_{i=m}^\ILD \acl(\bar{c}_i)$ is a $\Sigma^0_1$-subset of $\N$, but Fact \ref{fact:Carousel} shows that this is exactly $\acl(a_1,\ldots a_m)$, so the $m$-dimensional model is recursively presentable.
\end{proof}

\begin{theorem}\label{thm:from3to0and1}
	Suppose that $T$ is a model complete flat 
	strongly minimal theory. Further suppose that the smallest circuit in a saturated model of $T$ has length $3$ (i.e. $n=2$). Suppose that there is a recursive model of $T$ of finite dimension $\geq 3$. Then the $0$- and $1$-dimensional models of $T$ have recursive presentations, if they exist.
\end{theorem}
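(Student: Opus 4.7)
The plan is to split cases on the value of $\ILD$, reducing most cases to Lemma \ref{Rec Below ILD} and attacking the remainder with Lemma \ref{GoingDownTrick}. Since $n = 2$, Lemma \ref{lemma: infinite PPS over indepednent n+1-tuple} gives $\ILD \leq 3$, so $\N$ has dimension at least $\ILD$, and Lemma \ref{Rec Below ILD} hands us a recursive presentation of every model of $T$ of dimension strictly less than $\ILD$. In particular, if $\ILD \geq 2$ then both the $0$- and $1$-dimensional models (when they exist) are handled at once. The outstanding cases are therefore (a) $\ILD \leq 1$ with target the $1$-dimensional model, and (b) $\ILD = 0$ with target the $0$-dimensional model; in each, the target dimension $m \in \{0,1\}$ satisfies $m \geq \ILD$.

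For an outstanding case, fix the target dimension $m \geq \ILD$ and suppose the $m$-dimensional model exists. Pick $\bar{y}$ an independent $m$-tuple in $\N$ (taking $\bar{y}$ empty when $m = 0$); then $\acl^\N(\bar{y})$ is isomorphic to the $m$-dimensional model and is $\Delta^0_2$ in $\N$ by Lemma \ref{FDClosedSetsAreDelta2}. It therefore suffices to produce an infinite $\Sigma^0_1$ subset of $\acl^\N(\bar{y})$ and invoke Lemma \ref{GoingDownTrick}. By the definition of $\ILD$, pick a tuple $\bar{x}_0$ in some model $\M' \models T$ with $\dm(\bar{x}_0) = \ILD \leq m$ and $\Lambda^{\M'}_\phi(\bar{x}_0)$ infinite, and fix a basis $\bar{b}_0 \subseteq \bar{x}_0$. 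Let $\bar{b}$ be an independent subtuple of $\bar{y}$ of length $\ILD$; by homogeneity of the pregeometry, $\bar{b}_0$ and $\bar{b}$ share the unique generic $\ILD$-type. Since $\bar{x}_0$ is algebraic over $\bar{b}_0$, we may realize the corresponding algebraic type over $\bar{b}$ by some $\bar{x} \subseteq \acl^\N(\bar{b}) \subseteq \acl^\N(\bar{y})$ with $\bar{x}\bar{b}$ of the same type as $\bar{x}_0\bar{b}_0$ (in the $\ILD = 0$ case this just reduces to realizing the algebraic type of $\bar{x}_0$ over $\emptyset$).

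Since $\Lambda_\phi(\cdot)$ depends only on the type of its input, $\Lambda^\N_\phi(\bar{x})$ is infinite. By model completeness, $\phi$ is recursive in $\N$, so $\Lambda^\N_\phi(\bar{x})$ is $\Sigma^0_1$; and it lies in $\acl^\N(\bar{x}) \subseteq \acl^\N(\bar{y})$ by Observation \ref{LambdaIsAlgebraic}. Lemma \ref{GoingDownTrick} then produces a recursive presentation of $\acl^\N(\bar{y})$, which is the $m$-dimensional model. The main point is conceptual rather than technical: the precise obstruction to invoking Lemma \ref{Rec Below ILD} (namely $m \geq \ILD$) is simultaneously the hypothesis that supplies the tuple needed for the direct argument, so the two methods dovetail and together cover every case in the statement.
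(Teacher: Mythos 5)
Your proof is correct and follows essentially the same route as the paper: models of dimension below $\ILD$ are handled by Lemma \ref{Rec Below ILD}, and for a target dimension $m\in\{0,1\}$ with $m\geq\ILD$ one combines the $\Delta^0_2$ copy from Lemma \ref{FDClosedSetsAreDelta2} with an infinite $\Sigma^0_1$ set of the form $\Lambda_\phi(\bar{x})$ inside it and applies Lemma \ref{GoingDownTrick}. The only difference is that you spell out, via realizing the algebraic type of an $\ILD$-witness over a generic subtuple, why such an $\bar{x}$ exists inside $\acl(\bar{y})$ — a detail the paper leaves implicit.
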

\begin{proof}
	We know from Lemma \ref{lemma: infinite PPS over indepednent n+1-tuple} that $\ILD\leq n+1=3$. Thus, for any $m<\ILD$, we know that the $m$-dimensional model of $T$ is recursively presentable, if it exists. 
	
	Fix $\N$ a recursive model of $T$ of finite dimension $\geq 3$. For $m=0$ or $m=1$, we have a $\Delta^0_2$-copy of the $m$-dimensional model in $\N$ by Lemma \ref{FDClosedSetsAreDelta2}. But if $m\geq \ILD$, then it contains an infinite $\Sigma^0_1$-subset, namely $\Lambda_\phi(\bar{x})$ for some $\bar{x}$ of dimension $\ILD$. Then by Lemma \ref{GoingDownTrick}, the $m$-dimensional model has a recursive presentation.
\end{proof}

We now exclude a few more spectra not previously excluded.

\begin{theorem}\label{No Weirdness With omega}
	Let $T$ be a flat model complete strongly minimal theory
	Let $1\leq m\in \omega$ and suppose that $T$ has a recursive $m$-dimensional model and a recursive $\omega$-dimensional model. Then every model of $T$ of dimension $<m$ is recursively presentable. 
\end{theorem}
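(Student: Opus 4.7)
The plan is to split by $n$, the dimension of the smallest circuit of size greater than $2$. For $n \geq 3$, Lemmas \ref{lem:pgeqnforlargen} and \ref{lem:pgeqnforn3} give $p \geq n$, so the existence of the $m$-dimensional model forces $m \geq p \geq n \geq 3$; then Theorem \ref{thm:For n not equal to 2} applied to the recursive $m$-dimensional model immediately yields a recursive presentation for every model of dimension $\leq m$. The $\omega$-dimensional hypothesis is not needed in this case.

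For $n = 2$, I would subdivide by $m$. When $m \geq 3$, Theorem \ref{thm:downton} takes care of dimensions in $[2, m]$ and Theorem \ref{thm:from3to0and1} takes care of dimensions $0$ and $1$ (whenever they exist). When $m = 1$, the only model to produce is the $0$-dimensional one: since $\Lambda_\phi(\emptyset) = \emptyset$ forces $\ILD \geq 1$, Lemma \ref{Rec Below ILD} applied to the recursive $\omega$-dimensional model provides it.

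The delicate subcase is $n = 2$, $m = 2$. Lemma \ref{Rec Below ILD} (fed by the recursive $\omega$-dimensional model) already covers every model of dimension $< \ILD$, which handles dimension $0$ always and dimension $1$ whenever $\ILD \geq 2$. The genuine obstacle is the $1$-dimensional model when $\ILD = 1$. To handle it, I would use the recursive $2$-dimensional model $\N$ directly. Fixing a witness tuple $\bar{x}$ of dimension $1$ with $\Lambda_\phi(\bar{x})$ infinite, I would invoke the formula $\psi$ constructed in the proof of Lemma \ref{closure of ILD-1} (isolating the type of $\bar{x}$ over its singleton basis) to locate, for a generic $a \in \N$, a tuple $\bar{b} \in \acl(a)$ such that $(a, \bar{b})$ realizes the type of $\bar{x}$. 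Then $\Lambda^{\N}_{\phi}(a, \bar{b})$ is infinite (it is isomorphic to $\Lambda_\phi(\bar{x})$ by strong minimality), is $\Sigma^0_1$ in $\N$ by Lemma \ref{FinLambdaIsRE} together with model completeness making $\psi$ recursive, and is contained in $\acl(a)$ by Observation \ref{LambdaIsAlgebraic} and $\bar{b} \in \acl(a)$. Combined with the $\Delta^0_2$-ness of $\acl(a)$ from Lemma \ref{FDClosedSetsAreDelta2}, Lemma \ref{GoingDownTrick} yields the required recursive presentation of the $1$-dimensional model. The main point is that this residual case is the only place where neither the $\omega$-dimensional nor the finite-dimensional input alone suffices; one must combine them by porting the $\psi$-technique from Lemma \ref{closure of ILD-1} into the finite-dimensional recursive model.
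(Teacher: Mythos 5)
Your case analysis is sound except at one point, and that point is a genuine gap: the inference that ``$\Lambda_\phi(\emptyset)=\emptyset$ forces $\ILD\geq 1$'' does not follow. $\ILD$ is defined as the least \emph{dimension} of a tuple with infinite $\phi$-closure, not the least length; a nonempty tuple contained in $\acl(\emptyset)$ has dimension $0$, and when $\acl(\emptyset)$ is infinite (i.e.\ $p=0$) nothing you have said excludes such a tuple having infinite $\Lambda_\phi$, i.e.\ $\ILD=0$. If $\ILD=0$, your $m=1$ subcase collapses (the $0$-dimensional model exists exactly when $p=0$, but it is not of dimension $<\ILD$, so Lemma \ref{Rec Below ILD} yields nothing), and in your $m=2$ subcase the claim that Lemma \ref{Rec Below ILD} handles dimension $0$ ``always'' fails, with dimension $1$ also left untreated when $\ILD=0$. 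The repair is the technique you already use for the $n=2$, $m=2$, $\ILD=1$ case, applied uniformly: for every $k$ with $\ILD\leq k<m$, take a generic $k$-tuple $\bar y$ in the recursive $m$-dimensional model, realize the type of the dimension-$\ILD$ witness inside $\acl(\bar y)$ (your $\psi$-argument does this, sending the basis of the witness to an independent $\ILD$-subtuple of $\bar y$, or realizing the isolated type inside $\acl(\emptyset)$ when $\ILD=0$), and conclude by Lemmas \ref{FDClosedSetsAreDelta2} and \ref{GoingDownTrick}. This is in fact the paper's entire proof: it is uniform in $\ILD$, never splits on $n$ or $m$, handles dimensions $<\ILD$ by Lemma \ref{Rec Below ILD} via the recursive $\omega$-dimensional model, and handles dimensions in $[\ILD,m)$ by $\Delta^0_2$-ness inside the $m$-dimensional model together with the infinite $\Sigma^0_1$ set $\Lambda_\phi(\bar x)$.

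Apart from this, your decomposition by $n$ is a legitimate alternative route: invoking Lemmas \ref{lem:pgeqnforlargen} and \ref{lem:pgeqnforn3} and Theorems \ref{thm:For n not equal to 2} and \ref{thm:from3to0and1}, you isolate that the recursive $\omega$-dimensional model is only genuinely needed when $n=2$ and $m\leq 2$, an observation the paper's uniform argument does not make explicit; the cost is a longer case analysis, and the one residual case you did not cover is precisely the $\ILD=0$ possibility above.
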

\begin{proof}
	Since the $\omega$-dimensional model of $T$ is recursive, Lemma \ref{Rec Below ILD} 
	shows that every model of $T$ of dimension $< \ILD$ is recursively presentable. So, suppose $\ILD\leq k<m$. Then we have a $\Delta^0_2$-copy of the $k$-dimensional model in the $m$-dimensional model by Lemma \ref{FDClosedSetsAreDelta2}. But since $k\geq \ILD$, it contains an infinite $\Sigma^0_1$-subset, namely $\Lambda_\phi(\bar{x})$ for some $\bar{x}$ of dimension $\ILD$. Then by Lemma \ref{GoingDownTrick}, the $k$-dimensional model has a recursive presentation.
\end{proof}

We now prove our main theorem.

\begin{theoremOne}
	If $T$ is a flat model complete strongly minimal theory, then $\SRM(T)$ is contained in one of the following schema:
	
	\begin{itemize}
		\item $[0,\alpha)$ for $\alpha\in \omega+2$, $[0,n]\cup \set{\omega}$ for $n\in \omega$, $\{\omega\}$, 
		\item $\{1\}$, $\{2\}$, $\{0,2\}$, $\set{1,2}$

	\end{itemize}

Further, the sets in the first line are in fact spectra of flat
model complete strongly minimal theories. We do not know whether any of the 4 sets (not schema) in the second line are spectra of flat model complete strongly minimal theories.
\end{theoremOne}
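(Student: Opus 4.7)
The plan is a case analysis on $n$ (the dimension of the smallest circuit of size $>2$, which satisfies $n\geq 2$ since flatness implies non-disintegration) and $p$ (the dimension of the prime model of $T$). The three machines built earlier do all the work: Theorem \ref{thm:downton} propagates recursive presentations from a finite-dimensional model down to dimension $n$; Theorem \ref{thm:from3to0and1} says that when $n=2$, a recursive model of dimension $\geq 3$ also gives recursive $0$- and $1$-dimensional models; and Theorem \ref{No Weirdness With omega} says that a recursive $\omega$-dimensional model together with any recursive finite-dimensional model of dimension $\geq 1$ forces all smaller models recursive.

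In the case $n\geq 3$, Lemmas \ref{lem:pgeqnforlargen} and \ref{lem:pgeqnforn3} give $p\geq n$, so every model of $T$ has dimension $\geq n$, and Theorem \ref{thm:downton} forces the finite part of $\SRM(T)$ to be a downward-closed subset of $\omega$. Theorem \ref{No Weirdness With omega} propagates this property across $\omega$, so the spectrum must be of the form $[0,\alpha)$ for some $\alpha\in\omega+2$, $[0,N]\cup\{\omega\}$ for some $N\in\omega$, or $\{\omega\}$; all of these sit in the first schema.

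In the case $n=2$, Lemma \ref{lemma: p leq n plus 1} gives $p\leq 3$, and I would split by $p$. For $p\in\{2,3\}$ every model has dimension $\geq n$ and the argument of the previous paragraph repeats. For $p=1$: a recursive $\M_s$ with $s\geq 1$ propagates down to $\M_1$ via Theorem \ref{thm:downton}, and for $s\geq 2$ also hits $\M_0$ via Theorem \ref{thm:from3to0and1}, so the only non-initial-segment possibility for the finite part of $\SRM(T)$ is $\{1\}$. For $p=0$: a recursive $\M_s$ with $s\geq 3$ forces all $\M_t$ for $t\leq s$ by the same two theorems; a recursive $\M_2$ forces only itself; and recursive $\M_0$ or $\M_1$ force nothing else, giving exactly the anomalous finite spectra $\{1\},\{2\},\{0,2\},\{1,2\}$. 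In both subcases, Theorem \ref{No Weirdness With omega} ensures these anomalies disappear as soon as $\omega\in \SRM(T)$, so they occur only as the four sets listed in the second schema.

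The realizability clause for the first schema follows directly from the constructions in \cite{A0n} and \cite{AndrewsMermelstein}. The main obstacle here is not technical---all the heavy lifting is in the earlier sections---but completeness of the case analysis: one must enumerate all combinations of $(n,p)$ and membership of $\omega$ in $\SRM(T)$, verifying that each either yields a downward-closed spectrum falling into the first schema or produces precisely one of the four anomalous sets in the second schema, with no other configuration escaping the net.
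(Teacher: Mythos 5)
Your argument is correct and follows essentially the same route as the paper: the same four ingredients (Theorem \ref{thm:downton}, the lemmas giving $p\geq n$ for $n\geq 3$, Theorem \ref{thm:from3to0and1}, and Theorem \ref{No Weirdness With omega}) drive the case analysis, and your organization by $p$ (using $p\leq n+1$) is just a repackaging of the paper's enumeration of the subsets of $\set{0,1,2,\omega}$ in the $n=2$ case, correctly handling the index-versus-dimension translation. One small caveat on the realizability clause: the spectrum $\{\omega\}$ is not covered by \cite{A0n} and \cite{AndrewsMermelstein} alone but comes from \cite{An11}, together with a short argument (as in the paper) that the relevant reduct remains flat and model complete.
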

\begin{proof}
	In the case where $n\neq 2$, Theorem \ref{thm:For n not equal to 2} shows that $\SRM(T) \cap [0,\omega)$ is an initial segment of $[0,\omega)$. Thus $\SRM(T)$ is either of the form $[0,\alpha)$ for some $\alpha\in \omega+2$ or $[0,n]\cup \set{\omega}$ for some $n\in \omega$ or $\set{\omega}$.
	 Next, we consider the case where $n=2$. By Theorem \ref{thm:downton}, the collection of dimensions of recursive models of $T$ is initial among finite dimensions $\geq 2$. Let us first assume that $\SRM(T)\cap [3,\omega)\neq \emptyset$. Then in particular, there is a recursive model of finite dimension $\geq 3$. Then Theorem \ref{thm:from3to0and1} shows that the $0$ and $1$-dimensional models (if they exist) are recursive as well. Thus the collection of dimensions of recursive models of $T$ is initial among finite dimensions. Thus again $\SRM(T)$ is either of the form $[0,\alpha)$ for some $\alpha\in \omega+2$ or $[0,n]\cup \set{\omega}$ for some $n\in \omega$.
	
	Now we consider the case where $n=2$ and $[3,\omega)\cap \SRM(T)=\emptyset$. There are 16 such subsets of $[0,\omega]$. Of these, $\emptyset$, $\set{0}$, $\set{0,1}$, $\set{0,\omega}$, $\set{0,1,2}$, $\set{0,1,\omega}$, $\set{0,1,2,\omega}$, and $\set{\omega}$ are already covered by schemata in the first item. The remaining 8 are the sets in the second item along with $\set{1,\omega}$, $\set{2,\omega}$, $\set{0,2,\omega}$, $\set{1,2,\omega}$. Each of these last four are excluded by Theorem \ref{No Weirdness With omega}.

	The fact that the sets in the schema $[0,\alpha)$ for $\alpha\in \omega+2$ are spectra of strongly minimal theories with finite signatures is proved in \cite{A0n}. In that paper, it is not established that the theory is model complete and has a flat pregeometry, but the construction fits into the framework in \cite[Section 2]{AndrewsMermelstein} and \cite[Lemma 2.23 and Corollary 2.24]{AndrewsMermelstein} show that the theory is model complete and has a flat pregeometry.

	Similarly, the fact that the set $\{\omega\}$ is a spectrum of a strongly minimal theory with a finite signature is proved in \cite{An11}. Once again, this construction fits into the framework in \cite[Section 2]{AndrewsMermelstein}\footnote{There is a slight difference between the papers regarding the definition of an extension being ``of the form'' of another extension, but this makes no difference. In fact, the construction in \cite{An11} can be altered to use the definition from \cite{AndrewsMermelstein} with no change to the rest of the proof.} and so the theory of $\M\vert_{L'}$ (see \cite[Lemma 27]{An11}) is flat and model complete. From there, the model $\M'$ is formed by restricting $\M\vert_{L'}$ to the signature $\{R\}$. Each other symbol from $L'$ is existentially definable in this restriction by  \cite[Lemma 23]{An11}. They are also universally definable because, for every $x,y$ there must be exactly two $z$ so that $R_i(x,y,z)$ holds (since $\mu$ allows this many and $xy\leq \M$). Thus the reduct to $\M'$ is also flat and model complete. The theory of $\M'$ is shown to have spectrum $\{\omega\}$ in \cite[Theorem 30]{An11}.

	Finally, the sets in the schema $[0,n]\cup \set{\omega}$ for $n\in \omega$ are shown to be spectra of flat model complete strongly minimal theories in \cite{AndrewsMermelstein}.
\end{proof}

\bibliographystyle{alpha}
\bibliography{refs}

\end{document}